\newcommand{\bT}{{\mathbb T}}
\newcommand{\bS}{{\mathbb S}} 
\newcommand{\bZ}{{\mathbb Z}}
\newcommand{\bR}{{\mathbb R}}
\newcommand{\bC}{{\mathbb C}}
\newcommand{\bF}{{\mathbb F}}
\newcommand{\bG}{{\mathbb G}}
\newtheorem{thm}{Theorem}[section]
\newtheorem{lemma}[thm]{Lemma}
\newtheorem{cor}[thm]{Corollary}
\numberwithin{equation}{section}
\begin{document}

\title[Chow rings of flag varieties]{computing Chow rings 
of twisted flag varieties for subgroups
of an algebraic group}
 
\author{Nobuaki Yagita}

\address{ faculty of Education, 
Ibaraki University,
Mito, Ibaraki, Japan}
 
\email{ nobuaki.yagita.math@vc.ibaraki.ac.jp, }

\keywords{algebraic cobordism,  Chow rings,
flag varieties, invariant ideals}
\subjclass[2010]{ 55N20, 14C15, 20G10}

\begin{abstract}
Let $G_k$ be a split algebraic group over a field $k$.  
Given a non-trivial $G_k$-torsor $\bG$, we consider the (twisted)
flag variety $\bF=\bG/B_k$ for the Borel subgroup $B_k$ containing in $G_k$.  
 The purpose of this paper is the study how to compute
the Chow rings $CH^*(\bF)$ for various $\bF$ over some 
extension fields $K$ of $k$. 

\end{abstract}

\maketitle

\section{Introduction}

Let $G_k$ be a split algebraic group over a field $k$.  
Given a non-trivial $G_k$-torsor $\bG$, we consider the (twisted)
flag variety $\bF=\bG/B_k$ for the Borel subgroup $B_k$ containing in $G_k$.  
 The purpose of this paper is the study how to compute
the Chow rings $CH^*(\bF)$ for various $\bF$ over some 
extension fields $K$ of $k$. 
Non-split Chow rings $CH^*(\bF)$ are studied mainly when $\bG$ are versal (which is thought the  most twisted cases).  
We will study  non-versal cases also.

Let  $G'\subset G$
( with some weak condition).  Let us write 
by  $\bF(v)$  the flag variety for the $versal$ torsor  $\bG(v)$ and $K$  is the field such that $\bG(v)$ is defined over $K$,
(notations $\bF'(v)$, $K'$ are defined similarly for $G'$).
That is, $K=k(GL_N/G_k)$ for embedding $G_k\subset GL_N$ for a large $N$.  Since $G'\subset G$, we see $K\subset K'$.
Then we note the isomorphism of $K'$-motives
\[ \bF(v)|_{K'}\cong \bF'(v)\otimes  \bT(G',G))\qquad(Theorem \ 4.2)\]
for a sum $\bT(G',G)$ of Tate motives. 
We will try to compute the versal $CH^*(\bF(v))$ from
the sequences of some standard inclusions of
simple simply connected groups  
\[ G=G_n\supset G_{n-1}\supset ...\supset G_1.\]
\[ CH^*(\bF^n(v)) \to CH^*(\bF^{n-1}(v))\to ...\to 
CH^*(\bF^1(v)),\]
where $\bF^i(v)$ is the versal flag for the torsor $\bG_i(v)$.

In this paper, we will mainly discuss the Chow ring of the generalized Rost motive $R(\bG))$ instead of $CH^*(\bF)$
such that $CH^*(\bF)\cong CH^*(R(\bG))\otimes T$ for some sum $T$ of Tate motives.  

For example, when $G=Spin_{11}$ and 
$G'=Spin_{7}$, we see that
\[ CH^*(R(\bG(v))|_{K'})/2\cong CH^*(R(\bG'(v)))/2\otimes \Lambda(y_{10})\]
where $2deg(y_{10})=|y_{10}|=10.$
(Hence $CH^*(\bT(G',G))\cong \Lambda(y_{10})$.)
Here we have seen 
\[ CH^*(R(\bG(v)))/2\cong \bZ/2
\{1,c_2,c_3,c_4,c_5,c_2c_4,e_{8}\},
\quad |c_i|=2i,\ |e_8|=16,\] \[
\ \ CH^*(R(\bG'(v)))/2\cong \bZ/2\{1,c_2,c_3\}\]
where $ \bZ/2\{a,...,b\}$ is the $\bZ/2$-free module generated by $a,...,b$. 
The restriction to 
$ CH^*(R(\bG(v))|_{K'})/2$ is given as 
$ c_4,c_5\mapsto 0,\ \
e_8\mapsto c_3y_{10}.$

\section{versal torsor and specialization}

Let $k$ be a field with $ch(k)=0$.
Let $G$ be a connected compact Lie group and $G_k$ be the corresponding split algebraic group over $k$ corresponding to the Lie group $G$.

The versal $G_k$-torsor $\bG(v)$ is defined as follows
([Ka1.Ka2]).
Let $G_k\subset GL_N$ be an embedding for some large $N$.
Let $S=GL_N/G_k$ and $k(S)$ be its function field. The versal
$G_{k(S)}$-torsor $\bG(v)$ is defined as a generic fiber $E$ of the projection $GL_N\to S$ induced from the map $Spec(k(S))\to S$.

We fix the field $k$.
Let $K$ be an extension of $k$. Let us write by  $\bG(K)$ a $G_K$-torsor and the flag variety $\bF(K)=\bG(K)/B_K$ over $K$. (Of course $\bF(K)$ is not uniquely determined.) Then we have the natural map (specialization)
\[ sp_{CH}: CH^*(\bF(v))\to CH^*( \bF(K))\]
where $\bF(v)=\bG(v)/B_k$ is the versal flag variety.
Therefore the versal flag variety $\bF$ is thought as the most 
twisted (complete) flag variety over $K$ in those defined over
extensions $K$ over $k$.

\section{Generalized Rost motives}

Recall that it is known by Borel and Toda ([Bo], [To], [Ya3])
\[CH^*(G_k/B_k)\cong H^{2*}(G/T)\cong P(y)\otimes S(t)/(b)\]
\[ where  \quad 
 P(y)=\bZ_{(p)}[y_1,...,y_s]/(y_1^{p^{r_1}},....,y_s^{p^{r_s}})\quad |y_i|=even,\]
\[ S(t)/(b)=\bZ_{(p)}[t_1,...,t_{\ell}]/(b_1,...,b_{\ell}),\quad |t_i|=2\]
for some regular sequence $(b_1,...,b_{\ell})$ in $ S(t)=\bZ_{(p)}[t_1,...,t_{\ell}]$.  (We also use the additive identify
\[ P(y)\cong grP(y)= \Lambda(y_1,y_1^p,...,y_1^{p^{r_1-1}},...,
y_s,y_s^p,...,y_s^{p^{r_s-1}})\]
for the notation that  $\Lambda(a,...,b)=\bZ_{(p)}[a,...,b]/(a^p,...,b^p)$.

 Let $\bG(K)$ be a $G_K$-torsor and $\bF(K)=\bG(K)/B_K$.
Let $J(\bG(K))=(j_1,...,j_s)$ be the J-invariant so that $y_i^{p^{j_i}}
\in Im(res_{CH}(\bF(K))$ for the restriction map $res_{CH}: CH^*(\bF(K))\to CH^*(\bF(K)|_{\bar K})$.  Let us write
\[P(y^J)=\bZ_{(p)}[y_1^{p^{j_1}},...,y_s^{p^{j_s}}]/(y_1^{p^{r_1}},....,y_s^{p^{r_s}}).\]
Then the main theorem  of Petrov-Semenov-Zainoulline is written as

\begin{thm} ([Pe-Se-Za])
Given a $G_K$-torsor $\bG(K)$,
there is an irreducible motive $R(\bG(K))$ over $K$ such that 
the motive $M(\bF(K))$ is decomposed as
\[ M(\bF(K))\cong R(\bG(K))\otimes T(K)\]
where $T(K)$ is a sum of Tate motives such that for $J=J(\bG(K))$
\[ CH^*(T(K))\cong P(y^{J})\otimes S(t)/(b).\]
\end{thm}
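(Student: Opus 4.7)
The plan is to work inside the category of Chow motives over $K$ and to transfer information back and forth to the algebraically closed base change by means of Rost nilpotence. Over $\bar K$ the flag variety $\bF(K)\lk$ is cellular with
\[ CH^*(\bF(K)\lk)\cong P(y)\otimes S(t)/(b),\]
so its motive splits as a direct sum of Tate motives indexed by a monomial basis of this ring. Every summand of $M(\bF(K))$ is therefore controlled by an idempotent in $End(M(\bF(K)\lk))\cong CH^{\dim\bF}(\bF(K)\lk\times\bF(K)\lk)$ which lifts to a $K$-rational cycle class.

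First I would identify the rational part of $CH^*(\bF(K)\lk)$ explicitly. By definition of $J=J(\bG(K))$, the subring $P(y^J)\otimes S(t)/(b)$ is contained in $\Img(res_{CH})$; fix a monomial basis $\{\mu_i\}$ of this subring, so that $\mu_0=1$. Let $\{\mu_i^\vee\}$ be the Poincar\'e-dual basis in $P(y)\otimes S(t)/(b)$ (these dual classes need not be rational individually). Using the K\"unneth formula on $\bF(K)\lk\times\bF(K)\lk$ and multiplying rational classes pulled back from each factor by the rational diagonal class, I would assemble orthogonal projectors $\{p_i\}$ summing to the diagonal, arranged so that $p_0$ projects onto the Tate summand corresponding to the class $1$ and each remaining $p_i$ projects onto a shifted copy of that same summand. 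The candidate generalized Rost motive $R(\bG(K))$ is the image of $p_0$, and $T(K)$ is the complementary sum of Tate motives, whose Chow group is $P(y^J)\otimes S(t)/(b)$ by construction.

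Next I would invoke Rost nilpotence for twisted flag varieties: the kernel of $End(M(\bF(K)))\to End(M(\bF(K)\lk))$ consists of nilpotent endomorphisms, so each rational idempotent $p_i$ lifts uniquely up to conjugation to an idempotent in $End(M(\bF(K)))$. This yields the required motivic decomposition
\[ M(\bF(K))\cong R(\bG(K))\otimes T(K).\]
For irreducibility of $R(\bG(K))$ I would argue that the subgroup of rational classes in $CH^*(R(\bG(K))\lk)$ is concentrated in codimension zero: any further nontrivial idempotent would produce, after base change, a new rational monomial in $CH^*(\bF(K)\lk)$ lying outside $P(y^J)\otimes S(t)/(b)$, contradicting the maximality built into the definition of the J-invariant.

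The main obstacle is step two: for every basis monomial of $P(y^J)\otimes S(t)/(b)$ one must exhibit an honest rational lift on $\bF(K)\times\bF(K)$ of the corresponding dual projector, and this requires careful compatibility between the J-invariant, the action of Steenrod (or Landweber--Novikov) operations, and the Chow-theoretic K\"unneth decomposition. All other steps are either cellular/Tate-motivic bookkeeping or formal applications of Rost nilpotence; the genuinely new content of the theorem lies in producing this full family of orthogonal rational projectors.
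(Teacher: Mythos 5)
First, a point of comparison: the paper does not prove this theorem at all --- it is imported verbatim from Petrov--Semenov--Zainoulline [Pe-Se-Za] as a black box, so there is no internal proof to measure your argument against. Judged on its own terms, your outline does follow the general strategy of the original proof (cellularity of $M(\bF(K)|_{\bar K})$, construction of a family of rational projectors, lifting along Rost nilpotence, and a minimality argument via the definition of $J$ for indecomposability), but it leaves unproved exactly the step that constitutes the substance of [Pe-Se-Za], as you acknowledge in your last paragraph.

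That gap is not a technicality. The projector onto the Tate summand of $M(\bF(K)|_{\bar K})$ spanned by a basis monomial $\mu$ has the K\"unneth form $\mu^\vee\times\mu$, and while $\mu$ can be taken in the image of $res_{CH}$ (note that $S(t)/(b)$ is always rational since the $t_i$ are Chern classes of line bundles $\bG(K)\times^{B}\bA^1$ defined over $K$, and $y_i^{p^{j_i}}$ is rational by definition of $J$), the dual class $\mu^\vee$ in general is not. Multiplying rational pullbacks from the two factors by the rational diagonal class does not manufacture these projectors: the assertion that the diagonal of a non-split $\bF(K)$ decomposes into rational K\"unneth terms of the required shape is essentially the theorem itself. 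The mechanism that actually works is generic splitting: $\bF(K)$ has a rational point over its own function field, so the composite $CH^*(\bF(K)\times\bF(K))\to CH^*(\bF(K)|_{\bar K}\times\bF(K)|_{\bar K})\to CH^*(\bF(K)|_{\overline{K(\bF)}})$ is surjective, which supplies rational correspondences with any prescribed leading term; orthogonalizing these, controlling the error terms, and matching ranks against $P(y^J)\otimes S(t)/(b)$ is the heart of the PSZ argument and is where Steenrod/Landweber--Novikov operations genuinely enter. Two further points: Rost nilpotence for projective homogeneous varieties (Chernousov--Gille--Merkurjev, Brosnan) is a substantial external theorem that must be cited, not merely invoked; and your indecomposability argument needs the precise form of the definition of $J$ (minimality of the exponents $j_i$ with respect to a fixed ordering of the $y_i$), without which the appearance of ``a new rational monomial outside $P(y^J)\otimes S(t)/(b)$'' is not yet a contradiction.
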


{\bf Remark.} Each $\bG(K)/B_k)$ is $generically$ $split$
(i.e., split over  the function field 
$K(\bG(K)/B_k)$, see 4.1 in [Pe-Se-Za]).

{\bf Remark.}  The motive $R(\bG(K))$ are called generalized 
Rost motive.  Some cases they are original Rost motive $R_n$
$n\le 2$, but in general $R_n\not \cong R(\bG(K))$ [Ro],
[Pe-Se-Za]).

From the above theorem, we have the (additive) isomorphisms of motives over the algebraic closure $\bar K$
(  for  $\bar \bF(K)=\bF(K)\otimes \bar K)$
\[ P(y)\otimes S(t)/(b)\cong  \bar \bF(K)\cong 
CH^*(\bar R(\bG(K)))\otimes P(y^J)\otimes S(t)/(b).\]
\begin{cor}
We have   $ P(y)/(P(y^J) \cong  CH^*(\bar R(\bG(K))).$
\end{cor}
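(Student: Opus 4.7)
The plan is to base-change the Petrov--Semenov--Zainoulline decomposition of Theorem 3.1 to the algebraic closure $\bar K$, identify the Chow ring of the resulting split flag variety via Borel--Toda, and cancel a common Tate factor from the two sides of the resulting identity of graded abelian groups.

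First, Theorem 3.1 gives $M(\bF(K)) \cong R(\bG(K)) \otimes T(K)$ with $CH^*(T(K)) \cong P(y^J) \otimes S(t)/(b)$. Passing to $\bar K$ splits the torsor, so $\bar\bF(K) \cong G_{\bar K}/B_{\bar K}$ and Borel--Toda yields $CH^*(\bar\bF(K)) \cong P(y) \otimes S(t)/(b)$; since the motivic decomposition is preserved under base change, I obtain the additive identity
\[ P(y) \otimes S(t)/(b) \;\cong\; CH^*(\bar R(\bG(K))) \otimes P(y^J) \otimes S(t)/(b), \]
which is precisely the display just above the corollary. Next, I would use that $P(y^J) \subset P(y)$ is a polynomial subring over which $P(y)$ is free, with the monomials $\{y_1^{a_1}\cdots y_s^{a_s} : 0 \le a_i < p^{j_i}\}$ forming a homogeneous $P(y^J)$-basis. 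This basis realizes the quotient ring $P(y)/P(y^J) \cong \bZ_{(p)}[y_1,\ldots,y_s]/(y_1^{p^{j_1}},\ldots,y_s^{p^{j_s}})$ as a graded module, giving an isomorphism $P(y) \cong P(y^J) \otimes_{\bZ_{(p)}} (P(y)/P(y^J))$ of graded $\bZ_{(p)}$-modules. Substituting turns the identity above into
\[ (P(y)/P(y^J)) \otimes P(y^J) \otimes S(t)/(b) \;\cong\; CH^*(\bar R(\bG(K))) \otimes P(y^J) \otimes S(t)/(b). \]

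To conclude, I cancel the common factor $P(y^J) \otimes S(t)/(b)$ by comparing Poincaré series. Both sides are finitely generated graded-free $\bZ_{(p)}$-modules --- freeness of $CH^*(\bar R(\bG(K)))$ follows from the displayed identity together with torsion-freeness of the split flag variety $P(y) \otimes S(t)/(b)$ --- and the Poincaré polynomial of the common factor has constant term $1$, hence is invertible in $\bZ[[q]]$. Dividing yields equal Poincaré series for $P(y)/P(y^J)$ and $CH^*(\bar R(\bG(K)))$, which, combined with graded freeness, produces an additive isomorphism. The main (mild) obstacle is precisely this cancellation: it is formal but relies on the invertibility of the Poincaré polynomial and on the fact that we are asked for only an additive isomorphism --- a ring-level statement would require substantially more care.
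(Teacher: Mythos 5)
Your proposal is correct and follows essentially the same route as the paper: base-change the Petrov--Semenov--Zainoulline decomposition to $\bar K$, identify $CH^*(\bar\bF(K))$ with $P(y)\otimes S(t)/(b)$ via Borel--Toda, and cancel the common factor $P(y^J)\otimes S(t)/(b)$ from the resulting additive identity. The paper states this cancellation without justification, whereas you supply the (correct) formal argument via graded freeness and invertibility of the Poincar\'e series in $\bZ[[q]]$.
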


There is a specialization $CH^*(R(\bG)(v))\to CH^*(R(\bG(K))$,
and we have
\[ J(\bG(v)|_K)\ge J(\bG(K)) ,\quad i.e., P(y^{J(\bG(v)|_K)})\subset P(y^{J(\bG(K))}).\]
In particular, the versal case, $J(\bG(v))=(p^{r_1},...,p^{r_s})$ and  $P(y^J)=\bZ_{(p)}$.
\begin{cor} Suppose   $J=J(\bG(v)|_K)= J(\bG(K)).$  Then we have the isomorphism
\[  CH^*(R(\bG(v))|_K)\cong CH^*(R(\bG(K))\otimes P(y^J),\]
\[  that \ is, \quad  CH^*(R(\bG(v))|_K)/(P(y^J))\cong CH^*(R(\bG(K)).\]
\end{cor}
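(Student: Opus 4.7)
The plan is to apply Theorem 3.1 to the flag variety $\bF(v)|_K$ in two different ways and then compare the resulting decompositions. The first application takes the versal decomposition over $k$ and restricts it to $K$; the second treats $\bG(v)|_K$ directly as a $G_K$-torsor over $K$. The hypothesis $J(\bG(v)|_K) = J(\bG(K))$ will then allow us to substitute $R(\bG(K))$ for $R(\bG(v)|_K)$ in the final formula.

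First, because $\bG(v)$ is versal, $J(\bG(v)) = (r_1,\ldots,r_s)$ is maximal, so $P(y^{J(\bG(v))}) = \bZ_{(p)}$, and Theorem 3.1 gives $M(\bF(v)) \cong R(\bG(v)) \otimes T(v)$ with $CH^*(T(v)) \cong S(t)/(b)$. Restricting to $K$ and taking Chow rings yields
\[ CH^*(\bF(v)|_K) \cong CH^*(R(\bG(v))|_K) \otimes S(t)/(b). \]
Applying Theorem 3.1 directly to the torsor $\bG(v)|_K$ over $K$ (whose J-invariant is $J$ by hypothesis) gives instead
\[ CH^*(\bF(v)|_K) \cong CH^*(R(\bG(v)|_K)) \otimes P(y^J) \otimes S(t)/(b). \]
Equating the two expressions and cancelling the free factor $S(t)/(b)$ (for instance by rank comparison over $\bZ_{(p)}$, or via Krull--Schmidt for Chow motives together with matching of Tate summands) produces the intermediate identity
\[ CH^*(R(\bG(v))|_K) \cong CH^*(R(\bG(v)|_K)) \otimes P(y^J). \]

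It remains to identify $CH^*(R(\bG(v)|_K))$ with $CH^*(R(\bG(K)))$. By hypothesis the two torsors share the J-invariant $J$, and Corollary 3.2 already gives $CH^*(\bar R(\bG(v)|_K)) = P(y)/P(y^J) = CH^*(\bar R(\bG(K)))$ over the algebraic closure. Invoking the uniqueness of the indecomposable Rost summand from Theorem 3.1 promotes this to an isomorphism of $K$-Chow rings, and substituting completes the proof. The main obstacle is exactly this last step: while the comparison of the two PSZ decompositions is essentially formal, pinning down the Rost motive from the J-invariant alone---rather than from the finer data of the underlying torsor---relies on the uniqueness clause of Theorem 3.1 and on the generic-splitting property noted in the Remark.
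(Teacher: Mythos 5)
Your reduction to the intermediate identity $CH^*(R(\bG(v))|_K)\cong CH^*(R(\bG(v)|_K))\otimes P(y^J)$ is sound: applying Theorem 3.1 once over $k$ and restricting, and once directly to the torsor $\bG(v)|_K$ over $K$, then cancelling via Krull--Schmidt, is a legitimate (and slightly more explicit) version of the bookkeeping the paper does implicitly. The problem is the step you yourself single out as the main obstacle: identifying $CH^*(R(\bG(v)|_K))$ with $CH^*(R(\bG(K)))$. Here $\bG(v)|_K$ and $\bG(K)$ are in general two \emph{different} $G_K$-torsors, so Theorem 3.1 assigns to each its own indecomposable summand; the ``uniqueness clause'' of Theorem 3.1 (i.e.\ Krull--Schmidt applied to a single flag variety) says nothing about comparing the Rost summands of two distinct varieties $\bF(v)|_K$ and $\bF(K)$. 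Equality of J-invariants only tells you, via Corollary 3.2, that the two Rost motives have isomorphic Chow rings over $\bar K$ (both are $P(y)/P(y^J)$); passing from agreement over $\bar K$ to agreement over $K$ is precisely the non-formal content of the corollary and cannot be extracted from Theorem 3.1.

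The paper closes exactly this gap by citing the Vishik--Zainoulline motivic splitting lemma (Theorem 2.1 in [Vi-Za]): for generically split motives (which these are, by the Remark after Theorem 3.1), an isomorphism over the algebraic closure descends to one over $K$. Indeed the paper's whole proof consists of checking that both sides of the claimed isomorphism have Chow ring $P(y)$ over $\bar K$ and then invoking that theorem. Your argument needs the same external input at the final step; without it, the proof is incomplete. If you add the appeal to [Vi-Za, Theorem 2.1] (together with the generic-splitting hypothesis) in place of the appeal to ``uniqueness in Theorem 3.1,'' your route goes through and is essentially a more detailed version of the paper's.
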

\begin{proof}
We consider the
restriction to the algebraic closure $\bar K$.
First note that 
\[ CH^*((R(\bG(v))|_K)|_{\bar K}))\cong CH^*(\bar R(\bG))\cong P(y).\]
 On the other hand (aditively)
\[CH^*(\bar R(\bG(K))\otimes P(y^J)\cong P(y)/P(y^J)\otimes P(y^J)\cong P(y).\]
We get the isomorphism in this corollary, from a theorem by Vishik and Zinoulline (Theorem 2.1 in [Vi-Za]) that the isomorphism of the Chow ring of motives over an algebraically closed field $\bar K$ induces that in the field $K$ itself (when they are generically split).
\end{proof}

\section{subgroups $G'\subset G$}

Let $G'$ be a subgroup of $G$.
Let us write $K=k(GL_N/G_k)$ (resp. $K'=k(GL_N/G'))$ be the 
field such that $\bG$ (resp. $\bG'$)
 is the versal $G_k$-torsor.  Then the inclusion $G'\subset G$ induces a map $K\to K'$, so that $K'$ is a filed  extension of $K$.
The inclusion $G'\subset G$ induces
\begin{lemma}   We have the natural  map
 $R(\bG(v))|_{K'} \to R(\bG'(v))$
of motives over $K'$.
\end{lemma}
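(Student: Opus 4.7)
The plan is to realise the map as the composition
\[
R(\bG(v))|_{K'}\hookrightarrow M(\bF(v)|_{K'})\xrightarrow{\iota^*} M(\bF'(v))\twoheadrightarrow R(\bG'(v))
\]
in the category of Chow motives over $K'$, where the outer arrows come from Theorem 3.1 (applied to $\bG'(v)$ over $K'$, and base-changed from $K$ to $K'$ for $\bG(v)$) and the middle arrow is pullback along a geometrically defined morphism of flag varieties.

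The geometric input is the identification $\bG(v)|_{K'}\cong \bG'(v)\times^{G'}G$ of $G_{K'}$-torsors. This should follow by tracing generic fibres: the $G_k$-torsor $GL_N\to GL_N/G_k$ pulls back along the dominant map $GL_N/G'\to GL_N/G_k$ to the induced torsor $GL_N\times^{G'}G\to GL_N/G'$, and restricting once more to the generic point $Spec(K')$ of $GL_N/G'$ identifies $\bG'(v)\times^{G'}G$ with $\bG(v)|_{K'}$. Taking $G'$-equivariant quotients by Borel subgroups then gives $\bF'(v)\cong \bG'(v)\times^{G'}G'/B'$ and $\bF(v)|_{K'}\cong \bG'(v)\times^{G'}G/B$; reading the weak hypothesis on $G'\subset G$ as $B'\subset B$ (which one can always arrange by conjugation), the $G'$-equivariant map $G'/B'\to G/B$ induces $\iota\colon \bF'(v)\to \bF(v)|_{K'}$, and the transpose of its graph correspondence supplies $\iota^*\colon M(\bF(v)|_{K'})\to M(\bF'(v))$.

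It remains to sandwich $\iota^*$ between the Rost-summand inclusion and projection. Theorem 3.1 applied over $K'$ presents $R(\bG'(v))$ as a direct summand of $M(\bF'(v))$ through an explicit idempotent; base-changing the analogous idempotent over $K$ cuts out $R(\bG(v))|_{K'}$ inside $M(\bF(v))|_{K'}=M(\bF(v)|_{K'})$, and composing delivers the asserted map. The main obstacle I foresee is not geometric but lies in the bookkeeping around base change of the Rost-motive idempotent, together with the verification that the resulting map is ``natural'' in the precise sense that it is canonical up to the choice of a compatible pair $B'\subset B$.
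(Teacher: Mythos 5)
Your construction matches the paper's: both proofs compare the generic fibres of $GL_N$ over $GL_N/G'_k$ and over $GL_N/G_k$ to produce a $G'$-equivariant map $\bG'(v)\to\bG(v)|_{K'}$ (you phrase this as the identification $\bG(v)|_{K'}\cong\bG'(v)\times^{G'}G$, the paper as the universal property of the pullback along the two maps to $GL_N/G_k$), and this is the entire geometric content of the lemma. You in fact go further than the paper, whose written proof stops at the torsor-level map and leaves the passage to flag varieties and to the Rost summands implicit; your explicit sandwiching of $\iota^*$ between the inclusion and projection furnished by Theorem 3.1 fills in exactly the step the paper omits.
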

\begin{proof}
We consider the following diagram
\[\begin{CD}
\bG'(v) @>>>{\to} @>>>{\to} @>>> GL_N \\
@VVV  @. @. @VV{=}V  \\
{ } @.  {\bG(v)|_{K'}} @>>>{\bG(v)} @>>> GL_N \\
@VVV @VVV @VVV@VVV  \\
Spec(K') @>>{=}> Spec(K') @>>> {\to} @>>> GL_N/G_k'\\
@. @V{=}VV @VVV @VVV \\
@. Spec(K') @>>> Spec(K) @>>> GL_N/G_k
\end{CD} \]

First note that $\bG(v)|_{K'}$ is a pullback of $GL_N$ by the map
$Spec(K')\to Spec(K)\to  GL_N/G_k$.  In particular
\[Im(\bG(v)|_{K'} \to GL_N \to GL_N/G_k) = 
Im(\bG(v)|_{K'} \to Spec(K') \to Spec(K)\to GL_N/G_k).\]

On the other hand $\bG'(v)$ is a pullback of $GL_N$ by the map
$Spec(K') \to GL_N/G_k'$.   In particular
\[Im(\bG'(v) \to GL_N \to GL_N/G_k')= 
Im(\bG'(v) \to Spec(K')\to GL_N/G_k').\]
Of course their images  in $GL_N/G_k$ are the same.
Hence by the universality of the pullback for $\bG(v)|_{K'}$,
we have a map $\bG'(v)\to \bG(v)|_{K'}$.

\end{proof}

 Recall that $P_{G}(y)$ and $P_{G'}(y)$ are the polynomials
in $H^*(G/T)$ and $
H^*(G'/T)$ respectively (hence polynomials in $CH^*(G_k/B_k)$ and 
$CH^*(G_k'/B_k')$).

\begin{thm} Suppose $i_P^*:P_G(y)\to P_{G'}(y)$ is surjective.
Then as motives
\[ R(\bG(v)|_{K'})\cong R(\bG'(v)),\]
\[ 
 R(\bG(v))|_{K'}\cong R(\bG'(v))\otimes Ker(i_{P}^*).\]
\end{thm}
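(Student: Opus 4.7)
The plan is to compute the J-invariant $J(\bG(v)|_{K'})$ using the geometric inclusion $\bG'(v) \hookrightarrow \bG(v)|_{K'}$ furnished by Lemma 4.1, and then to combine the Petrov--Semenov--Zainoulline decomposition of the flag variety $\bF(v)|_{K'}$, viewed simultaneously as the flag of the $G_{K'}$-torsor $\bG(v)|_{K'}$ and as the base change of the versal flag $\bF(v)$, to extract both claimed isomorphisms. The first concrete step is to identify $P_G(y^{J(\bG(v)|_{K'})})$ with $\Ker(i_P^*)$ in the sense of the theorem. The heuristic is: each generator $y_i$ of $P_G(y)$ with $i_P^*(y_i)=0$ becomes rational after base change to $K'$, since its obstruction class is killed on $\bG'(v)$ and $\bG'(v)$ maps into $\bG(v)|_{K'}$, giving $j_i=0$; while each $y_i$ whose image is a non-trivial generator of $P_{G'}(y)$ retains maximal twist by versality of $\bG'(v)$, giving $j_i=r_i$.

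With this J-invariant in hand, I would apply Theorem 3.1 twice. Restricting the versal decomposition $M(\bF(v))\cong R(\bG(v))\otimes T_{\mathrm{vers}}$ (where $CH^*(T_{\mathrm{vers}})\cong S(t)/(b)$, since $P(y^J)=\bZ_{(p)}$ in the versal case) from $K$ to $K'$ gives
\[ M(\bF(v)|_{K'})\cong R(\bG(v))|_{K'}\otimes T_{\mathrm{vers}}. \]
Applying Theorem 3.1 directly to the torsor $\bG(v)|_{K'}$ and using the J-invariant computation yields
\[ M(\bF(v)|_{K'})\cong R(\bG(v)|_{K'})\otimes T' \]
with $T'\cong \Ker(i_P^*)\otimes T_{\mathrm{vers}}$ as sums of Tate motives. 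Equating these decompositions and cancelling the common factor $T_{\mathrm{vers}}$ by Krull--Schmidt uniqueness (the Rost motives being indecomposable) produces
\[ R(\bG(v))|_{K'}\cong R(\bG(v)|_{K'})\otimes \Ker(i_P^*). \]
To identify the two Rost motives, I would compare Chow rings over $\bar K'$: both $\bar R(\bG(v)|_{K'})$ and $\bar R(\bG'(v))$ are isomorphic to $P_{G'}(y)$ -- the first by Corollary 3.2 combined with the J-invariant step (so that $P_G(y)/P_G(y^J)\cong P_{G'}(y)$), the second by versality of $\bG'(v)$. Since both motives are generically split, Vishik--Zainoulline (Theorem 2.1 in [Vi-Za], as invoked in the proof of Corollary 3.3) lifts this Chow-ring isomorphism to a motivic isomorphism $R(\bG(v)|_{K'})\cong R(\bG'(v))$ over $K'$. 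Substituting into the previous display then yields the second formula of the theorem.

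The main obstacle is the J-invariant computation. One must establish two inequalities in parallel: generators of $P_G(y)$ lying in $\Ker(i_P^*)$ really do become rational over $K'$ (using the explicit map $\bG'(v)\to\bG(v)|_{K'}$ from Lemma 4.1 together with a comparison of the associated characteristic classes), bounding $J(\bG(v)|_{K'})$ from above; and generators mapping to non-zero elements of $P_{G'}(y)$ do not become rational, which uses versality of $\bG'(v)$ to transport the maximal $G'$-twist back through $i_P^*$. Once this explicit J-invariant is secured, the remainder of the argument reduces to formal manipulations of the PSZ decomposition, Krull--Schmidt cancellation of Tate factors, and Vishik--Zainoulline descent.
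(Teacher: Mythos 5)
Your overall architecture is the same as the paper's: everything rests on the Petrov--Semenov--Zainoulline decomposition of $M(\bF(v)|_{K'})$ read in two ways, Krull--Schmidt cancellation of Tate factors, and the morphism $R(\bG(v))|_{K'}\to R(\bG'(v))$ supplied by Lemma 4.1. The difference is the order of the steps, and your chosen order is where the one genuine gap sits. You propose to compute $J(\bG(v)|_{K'})$ first, and in particular to show that every generator $y_i$ with $i_P^*(y_i)=0$ becomes rational over $K'$ ``since its obstruction class is killed on $\bG'(v)$ and $\bG'(v)$ maps into $\bG(v)|_{K'}$.'' But the map of Lemma 4.1 goes \emph{from} $\bG'(v)$ \emph{into} $\bG(v)|_{K'}$, hence induces $\bF'(v)\to\bF(v)|_{K'}$ and only lets you pull rational cycles back from $\bF(v)|_{K'}$ to $\bF'(v)$. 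That gives exactly your second inequality (a generator with $i_P^*(y_i)\neq 0$ cannot become rational over $K'$, by versality of $\bG'(v)$), but nothing in the direction you need for the first: to show $y_i\in\Ker(i_P^*)$ is rational over $K'$ you must \emph{produce} a cycle on $\bF(v)|_{K'}$, and no production mechanism is indicated. What is really needed there is that $\bG(v)|_{K'}$ is induced from the $G'$-torsor $\bG'(v)$ (reduction of structure group), together with the Petrov--Semenov--Zainoulline comparison of $J$-invariants for torsors induced from subgroups --- a nontrivial input you neither prove nor cite.

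The paper avoids this by reversing the order. It first establishes $R(\bG(v)|_{K'})\cong R(\bG'(v))$ directly: the map $R(\bG(v))|_{K'}\to R(\bG'(v))$ kills the Tate summand $T'$ (since $R(\bG'(v))$ is irreducible and non-Tate), hence is nonzero on the irreducible summand $R(\bG(v)|_{K'})$, and surjectivity of $i_P^*$ makes the composite hit $P_{G'}(y)$ over the closure; irreducibility of both sides then forces the isomorphism. Only afterwards does it read off $T'\cong P_G(y)/P_{G'}(y)\cong\Ker(i_P^*)$ --- equivalently the $J$-invariant --- by comparing ranks over $\bar K'$. So either supply the induced-torsor $J$-invariant computation as a separate lemma, or adopt the paper's ordering, after which your remaining steps (the two PSZ decompositions, Krull--Schmidt, Vishik--Zainoulline) go through. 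Note also that Vishik--Zainoulline requires an actual morphism realizing the isomorphism over $\bar K'$, not merely abstractly isomorphic Chow rings; that morphism is again the one from Lemma 4.1, which you should invoke explicitly at that step.
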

\begin{proof}
We consider maps of motives over $K$
\[ \begin{CD} R(\bG(v)) @>(1)>> R(\bG(v))|_{K'} @>(2)>> \bar R(\bG(v)) \\
 @.     @V(3)VV   @V(4)VV   \\
   @.   R(\bG'(v)) @>(5)>>   \bar R(\bG'(v)).
\end{CD}\]
From the theorem by Petrov-Semenov-Zainoulline, we have
\[ R(\bG)(v)|_{K'}\cong R(\bG(v)|_{K'})\otimes T',\quad \]
where $T'$ is a sum of $K'$-Tate motives.  
Since $R(\bG'(v))$ is non Tate irreducible motive, the image
$(3)(T')=0$ in $R(\bG'(v))$.

 Recall $(2)(1)R(\bG(v))\cong P_G(y)$, and hence 
it isomorphic also to $(2)R(\bG(v))|_{K'}$.
Hence
$(4)(2)R(\bG(v))|_{K'}=P_{G'}.$ and $(5)(3)R(\bG(v))|_{K'}$ is a non zero  map.
Moreover $(3)R(\bG|_{K'})$ is non zero since $(3)T'=0$.

Since both $R(\bG(v)|_{K'})$ and $ R(\bG'(v))$ are irreducible
and is isomorphic to $P_{G'}(y)$ in $\bar R(\bG'(v))$, we get
\[ R(\bG(v)|_{K'})\cong R(\bG'(v)),\quad and \quad T'\cong P_G(y)/P_{G'}(y).\]
\end{proof}

More generally, we have
\begin{cor} Suppose that there is an extension $K''$ of $K$
and  an irreducible motive $R''$ over $K''$  such that there is a map 
of $K''$-motives 
$ R(\bG)|_{K''} \to R''$
and that $i_P^*:\bar R(\bG) \to \bar R''$ is surjective.
Then as motives
\[ R(\bG)|_{K''}\cong R''\otimes Ker(i_{P}^*).\]
\end{cor}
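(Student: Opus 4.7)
The plan is to follow the argument of Theorem~4.2 almost verbatim, with $R''$ and $K''$ playing the roles of $R(\bG'(v))$ and $K'$; the only difference is that the map $R(\bG)|_{K''}\to R''$ and the surjectivity of the induced $\bar K''$-map $i_P^*$ are now hypotheses rather than consequences of a subgroup relation $G'\subset G$.

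First I would apply the Petrov-Semenov-Zainoulline decomposition to the restricted torsor $\bG|_{K''}$ to obtain
\[ R(\bG)|_{K''}\cong R(\bG|_{K''})\otimes T''\]
as $K''$-motives, with $T''$ a sum of Tate motives. Since $R''$ is irreducible and non-Tate (to be compatible with the hypothesis that $i_P^*$ surjects onto it from the non-Tate motive $\bar R(\bG)$), the Tate summand $T''$ must map to zero under $R(\bG)|_{K''}\to R''$, so the given map factors through a map $R(\bG|_{K''})\to R''$.

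Next I would base change to $\bar K''$ and compare the two composites from $R(\bG)|_{K''}$ to $\bar R''$: one through $\bar R(\bG)$ followed by $i_P^*$, the other through $R''$ followed by base change. These composites agree by the very definition of $i_P^*$ as the base change of the given map, and by the surjectivity hypothesis on $i_P^*$ the factored $\bar K''$-map $\bar R(\bG|_{K''})\to \bar R''$ is surjective. Since both $R(\bG|_{K''})$ and $R''$ are irreducible generically split $K''$-motives, the theorem of Vishik-Zainoulline (cited as [Vi-Za, Theorem~2.1] in the earlier corollary) promotes this surjection on algebraic closures to an isomorphism $R(\bG|_{K''})\cong R''$ of $K''$-motives.

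Finally I would identify the Tate factor. Over $\bar K''$ the additive identity
\[ CH^*(\bar R(\bG))\cong CH^*(\bar R'')\otimes CH^*(T''),\]
combined with the short exact sequence $0\to Ker(i_P^*)\to \bar R(\bG)\to \bar R''\to 0$ coming from surjectivity, forces $CH^*(T'')$ and $Ker(i_P^*)$ to agree as graded abelian groups, hence $T''\cong Ker(i_P^*)$ as a sum of Tate motives. Substituting into the decomposition of the second paragraph yields the claimed isomorphism. The main obstacle I expect is this last identification step: one must check that $Ker(i_P^*)$, a priori only a graded subgroup of $\bar R(\bG)$, really splits off as a direct summand of the correct Tate type, and that the splitting descends from $\bar K''$ back to $K''$. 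This uses the polynomial structure of $\bar R(\bG)\cong P_G(y)$, the surjectivity hypothesis to produce a splitting of graded groups, and one final invocation of Vishik-Zainoulline descent.
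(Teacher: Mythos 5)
Your proposal is correct and follows essentially the same route as the paper, whose entire proof of this corollary is the one-line remark that the argument of Theorem~4.2 goes through with $K'$ replaced by $K''$ and $R(\bG'(v))$ by $R''$; you have simply spelled out those steps (the Petrov--Semenov--Zainoulline decomposition, vanishing of the Tate summand under the map to the irreducible non-Tate $R''$, comparison over $\bar K''$, and identification of the Tate factor with $Ker(i_P^*)$). No further comment is needed.
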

\begin{proof}
The arguments in the preceding theorem work, exchanging
$K'$ to $K''$, and $R(\bG')$ to $R''$.
\end{proof}

\section{Subgroups of the exceptional group $E_8$, $p=2$}

The exceptional Lie group $E_8$ has a $p$-torsion in its cohomology when $p=2,3,5$ [Mi-Tod].
For $G=E_8, p=2$, we see  
\[ CH^*(\bar R(\bG))/2\cong P(y)/2\cong \bZ/2[y_1,y_2,y_3,y_4]/(y_1^8,y_2^4,y_3^2,y_4^2)\]
\[ \stackrel{add.}{\cong} \Lambda(y_1,y_1^2,y_1^4,y_2,y_2^2,y_3,y_4)\quad |y_1|=6,|y_2|=10,|y_3|=18,|y_4|=30.\]
For example we take
subgroups $G'=E_7,Spin_{11},Spin_{7},Spin_{5}$, or the (original) Rost motive
$R_4$ as  $R(G'(v))$.


{\bf 5.1.}  We consider sequence of subgroups
\[ (1)\quad E_8\stackrel{y_1^2,y_1^4,y_2^2}{\gets}\begin{cases}
   \stackrel{y_{4}}{\gets}E_7 
\stackrel{y_{3}}{\gets}Spin_{11} 
\stackrel{y_{2}}{\gets}Spin_{7} 
\stackrel{y_{1}}{\gets}pt.\\  \\
 \stackrel{y_{1},y_{2}.y_{3}}{\gets}R_4\ (Rost\ motive)
\stackrel{y_4}{\gets} pt.
\end{cases} \]  
Here for  subgroups $G'\supset G''$, the graph 
$G' \stackrel{y_a}{\gets}... \stackrel{y_b}{\gets}G''$ means that
\[ P_{G'}(y) \cong  \Lambda(y_a,...,y_b)\otimes P_{G''}(y).\]
For example, we see that $Spin_{11}\stackrel{y_2}{\gets}
Spin_7 \stackrel{y_1}{\gets} pt.$ implies 
\[ P_{Spin_{11}}(y)\cong \Lambda(y_2)\otimes P_{Spin_{7}}(y)\cong \Lambda(y_2,y_1), \quad P_{Spin_7}(y)\cong \Lambda(y_1).\]
Hence  the maps $i_P^*$ in Theorem 4.2  are surjective.
Thus we have 
\begin{lemma}  
In (1), for  subgroups $G'\supset G''$, the graph 
$G' \stackrel{y_a}{\gets}... \stackrel{y_b}{\gets}G''$ means that
\[CH^*(R(\bG'(v))|_{K_{G''}}) \cong  \Lambda(y_a,...,y_b)\otimes CH^*(R(\bG''(v)))\]
where $K_{G''}=K(GL_N/G'')$ is the  field such that $\bG''$
 becomes versal, and hence 
 $\Lambda(y_a,...,y_b)\cong J(R(\bG')|_{K_{G''}}).$
\end{lemma}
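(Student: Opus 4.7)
The plan is to apply Theorem 4.2 to each consecutive pair $(G', G'')$ in the chain of subgroups appearing in the graph (1). By definition, the notation $G' \stackrel{y_a}{\gets} \dots \stackrel{y_b}{\gets} G''$ encodes the additive isomorphism
\[ P_{G'}(y) \cong \Lambda(y_a, \dots, y_b) \otimes P_{G''}(y), \]
so the restriction $i_P^* : P_{G'}(y) \to P_{G''}(y)$ induced by the inclusion $G'' \subset G'$ is surjective, with kernel additively $\Lambda(y_a, \dots, y_b)$. Hence the hypothesis of Theorem 4.2 is satisfied for each such pair.

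Next I would invoke Theorem 4.2 with $K_{G'} = k(GL_N/G_k')$ playing the role of the base field $K$ and $K_{G''} = k(GL_N/G_k'')$ playing the role of $K'$. This yields the motivic isomorphism
\[ R(\bG'(v))|_{K_{G''}} \cong R(\bG''(v)) \otimes T'', \]
where $T''$ is a sum of Tate motives with Chow ring additively isomorphic to $\Lambda(y_a, \dots, y_b)$. Passing to Chow rings gives directly the displayed formula of the lemma.

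For the $J$-invariant claim I would compare this decomposition against Theorem 3.1 (Petrov--Semenov--Zainoulline) applied to $\bG'(v)|_{K_{G''}}$: this theorem decomposes $M(\bF'(v)|_{K_{G''}})$ as $R(\bG'(v)|_{K_{G''}}) \otimes T$ with the Tate factor containing $P(y^{J(\bG'(v)|_{K_{G''}})})$. Since Theorem 4.2 already identifies $R(\bG'(v)|_{K_{G''}}) \cong R(\bG''(v))$, the residual Tate complement must be $\Lambda(y_a, \dots, y_b)$. Reading off the polynomial part forces $P(y^{J(\bG'(v)|_{K_{G''}})}) \cong \Lambda(y_a, \dots, y_b)$, giving the stated $J$-invariant.

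The main obstacle is verifying the surjectivity of $i_P^*$ for each of the specific pairs occurring in the diagram (1). This requires explicit knowledge of the polynomial algebras $P_G(y)$ at $p = 2$ for $G = E_8, E_7, Spin_{11}, Spin_7, Spin_5$ as well as for the original Rost motive $R_4$; the needed structural results are available from [Mi-Tod] and from the defining properties of Rost motives. Once these identifications are in hand, the graph (1) becomes a compact bookkeeping device, and the lemma follows by iterating Theorem 4.2 along the chain.
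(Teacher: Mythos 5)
Your proposal is correct and follows essentially the same route as the paper: the graph notation by definition gives $P_{G'}(y)\cong \Lambda(y_a,\dots,y_b)\otimes P_{G''}(y)$, hence $i_P^*$ is surjective, and Theorem 4.2 applied with $K=K_{G'}$, $K'=K_{G''}$ yields the tensor decomposition, with the Tate complement $\Lambda(y_a,\dots,y_b)$ identified via Theorem 3.1 as $P(y^J)$. The paper states this only as a one-line consequence ("Hence the maps $i_P^*$ in Theorem 4.2 are surjective. Thus we have..."), so your slightly more explicit treatment of the $J$-invariant clause is consistent with, not divergent from, the intended argument.
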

For example
\[CH^*(R(Spin_{11}(v))|_{K_{Spin_7}}) \cong  \Lambda(y_2)\otimes CH^*(R(Spin_{7}(v))),\quad \]
In fact, we can compute ($|c_i|=2i, |e_8|=16$)
\[CH^*(R(Spin_{11}(v))\cong \bZ_{(2)} \{1,c_3,c_5,e_8\}
\oplus \bZ/2\{c_2,c_4,c_2c_4\},\]
\[CH^*(R(Spin_{7}(v)))\cong  \bZ_{(2)}\{1,c_3\}\oplus \bZ/2\{c_2\}.\]
We have $c_4\mapsto 0$ and $c_5\mapsto 2y_2$, $e_8\mapsto c_3y_2$  in  $CH^*(R(Spin_{11}(v))|_{K_{Spin_7}}$.

Each $CH^*(R(\bG'(v)))$ should be computed from the right hand side to the left hand sided but it seems difficult in general.
To treat elements $c_2,c_4$ above, we use the algebraic cobordism.

{\bf 5.2.} 
Here we recall the algebraic cobordism $\Omega^*(X)$ such that
\[ CH^*(X)\cong \Omega^*(X)\otimes_{\Omega^*}\bZ_{(p)},\quad where \ \
\Omega^*\cong \bZ_{(p)}[v_1,v_2,...]\ \ with\ |v_i|=-2(p^i-1).
\]  For the restriction map 
$ res_{\Omega}: \Omega^*(X)\to \Omega^*(\bar X).$
let us write 
\[Res_{\Omega}(X)=Im(res_{\Omega})\subset 
gr \Omega^*(\bar X)\cong \Omega^*\otimes CH^*(\bar X)
.\]
(For details, see [Ya4].)
We consider the another  sequences representing 
$Res_{\Omega}(R(\bG'(v))$.
 For example
$(2,v_1^3)y_3$ (in the following graph)  is the invariant ideal generated by $2y_3,v_1^3y_3$ in
$\Omega^*(\bar R(\bG'(v))$ for $G'=E_7$.

\[ (2)\quad E_8\stackrel{...}{\gets}\begin{cases}
\stackrel{2y_{4},...}{\gets}E_7 
\stackrel{(2,v_1^3)y_{3},...}{\gets}Spin_{11} 
\stackrel{((2,v_1)y_{2}, (2,v_1^2)y_1y_2)}{\gets}Spin_{7} 
\stackrel{(v_1,2)y_{1}}{\gets}pt.
 \\  \\
 \stackrel{y_{1},y_{2}.y_{3}}{\gets}R_4\ (Rost\ motive)
\stackrel{(2,v_1,v_2,v_3)y_4}{\gets}pt.
\end{cases} \] 
\begin{lemma}
Given 
a subgroup $G''$ of $G'$, the graph 
$G' \stackrel{J_ay_a}{\gets}... \stackrel{J_by_b}{\gets}G''$ means  that
there is a (natural) map
\[Res_{\Omega}(R(\bG'(v))) \to   J_ay_a+...+J_by_b+
Res_{\Omega}(R(\bG''(v))).\]
\end{lemma}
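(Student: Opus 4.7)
The plan is to lift the Chow-level decomposition of Lemma 5.2 to algebraic cobordism, and then to read off the invariant ideals $J_i$ as obstructions to lifting the Tate generators $y_i$ through the restriction from $K_{G'}$ to $K_{G''}$.

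First, the surjectivity hypothesis $P_{G'}(y)\twoheadrightarrow P_{G''}(y)$ holds by the very construction of the graph (1) underlying Lemma 5.2, so Theorem 4.2 applied to the pair $G''\subset G'$ yields an isomorphism of $K_{G''}$-motives
\[R(\bG'(v))|_{K_{G''}}\cong R(\bG''(v))\otimes T,\]
where $T$ is a sum of Tate motives with $CH^*(T)\cong \Lambda(y_a,\ldots,y_b)$. Since algebraic cobordism is an oriented cohomology theory that respects motivic decompositions, applying $\Omega^*$ yields
\[\Omega^*(R(\bG'(v))|_{K_{G''}})\cong \Omega^*(R(\bG''(v)))\otimes_{\Omega^*}\Omega^*(T),\]
with $\Omega^*(T)$ a free $\Omega^*$-module on the basis $\{1,y_a,\ldots,y_b\}$.

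Next, I would compose the two natural restriction maps
\[\Omega^*(R(\bG'(v)))\stackrel{res_{K_{G''}}}{\longrightarrow} \Omega^*(R(\bG'(v))|_{K_{G''}})\stackrel{res_{\bar K}}{\longrightarrow} \Omega^*(\bar R(\bG'(v))),\]
whose total image is $Res_{\Omega}(R(\bG'(v)))$ by definition. Using the identification $\Omega^*(\bar R(\bG'(v)))\cong \Omega^*(\bar R(\bG''(v)))\otimes \Omega^*(T)$, each element $\alpha$ in this image expands uniquely as $\alpha=\alpha''+\sum_i \beta_i y_i$ with $\alpha'',\beta_i\in \Omega^*(\bar R(\bG''(v)))$. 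The trivial Tate summand in $T$ corresponds, via Lemma 4.1, to the natural map $R(\bG'(v))|_{K_{G''}}\to R(\bG''(v))$, so the component $\alpha''$ is precisely the image of $\alpha$ in $Res_{\Omega}(R(\bG''(v)))$.

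The heart of the argument is controlling the coefficients $\beta_i$. By construction, the invariant ideal $J_i\subset \Omega^*$ is generated by those $j\in\Omega^*$ for which $j y_i$ lies in $Res_{\Omega}(R(\bG'(v)))$ modulo $Res_{\Omega}(R(\bG''(v)))$; equivalently, $\Omega^*/J_i$ measures the obstruction, detected by Landweber--Novikov operations, to lifting $y_i$ to a global class in $\Omega^*(R(\bG'(v)))$. Thus $\beta_i\in J_i\cdot \Omega^*(\bar R(\bG''(v)))$, yielding the desired containment in $J_ay_a+\ldots+J_by_b+Res_{\Omega}(R(\bG''(v)))$. The main obstacle, and where this framework has genuine content, is to verify that the invariant ideals arising this way agree with the explicit ideals displayed in graph (2), such as $(2,v_1^3)y_3$ for the transition $E_7\leftarrow Spin_{11}$; this requires concrete $BP^*$-theoretic calculations done case by case in the spirit of [Ya4], and is where the real work of the theory is concentrated.
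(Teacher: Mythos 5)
Your proposal is essentially correct, but it is worth noting that the paper gives no proof of this lemma at all: the statement is really a notational convention (``the graph \dots\ means that there is a (natural) map''), and the paper's only justification is the worked examples that follow it, together with the warning that the map is in general neither surjective nor injective. Your reconstruction --- lift the decomposition $R(\bG'(v))|_{K_{G''}}\cong R(\bG''(v))\otimes T$ of Theorem 4.2/Lemma 5.1 to $\Omega^*$, compose the two restrictions, expand over the $\Omega^*$-basis $\{1,y_a,\dots,y_b\}$ of $\Omega^*(T)$, and identify the constant term with $Res_{\Omega}(R(\bG''(v)))$ via the map of Lemma 4.1 --- is the natural argument and is consistent with the paper. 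You are also right that the genuine content lives in the case-by-case determination of the ideals $J_i$. Two small cautions. First, your description of $J_i$ (``generated by those $j$ for which $jy_i$ lies in $Res_\Omega$ modulo $Res_\Omega$'') makes the containment $\beta_i\in J_i$ tautological; that is acceptable here precisely because the lemma is definitional, but it means your ``proof'' establishes only the existence of the map, not the displayed values of the $J_i$. Second, your conclusion $\beta_i\in J_i\cdot\Omega^*(\bar R(\bG''(v)))$ is coarser than what the graph records: in the paper's example $Res_{\Omega}(R(Spin_{11}(v)))\cong(2,v_1)y_2+(2,v_1^2)y_1y_2+Res_{\Omega}(R(Spin_7(v)))$, the monomials $y_2$ and $y_1y_2$ carry \emph{different} invariant ideals, so the indices $y_a,\dots,y_b$ must be read as running over monomials in the $y$'s, each with its own ideal in $\Omega^*$, rather than over the generators $y_i$ with coefficients allowed in $J_i\cdot\Omega^*(\bar R(\bG''(v)))$.
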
  For example,  we can compute
\[Res_{\Omega}(R(Spin_{11}(v))) \
\cong (2,v_1)y_2+(2,v_1^2)y_1y_2+Res_{\Omega}(R(Spin_7(v))).\]
\[Res_{\Omega}(R(Spin_7(v)))\cong (2,v_1)y_1\oplus \Omega^*.\]
Here the restrictions are given as 
\[c_2\mapsto v_1y_1, c_3\mapsto 2y_2,
c_4\mapsto v_1y_3,c_5\mapsto 2y_2,e_8\mapsto 2y_1y_2.\]
Hence, in these cases, the  map $\to$ in the above lemma are isomorphic.

However, in general, the above map is not surjective nor injective. For example, let $G'=E_7, G''=Spin_{11}$ the element $v_1y_2$ does  not come from
that for $E_7$ where $(2,v_1^3)y_2$ only exists.
 
{\bf 5.3.}    At last, we consider Chow groups.
We use the notations in \cite{YaC} for $b_i,c_j\in CH^*(R(\bG))$.
We also use $a_0,..,a_3$ for $2y_4,..,v_3y_4$ respectively. 
Consider the another diagram for Chow rings

\[ (3)\quad E_8\stackrel{...}{\gets}\begin{cases}
   \stackrel{a_0,...}{\gets}E_7
\stackrel{b_4,b_1b_3,...}{\gets}Spin_{11} 
\stackrel{c_5,c_4, c_2c_4,e_8}{\gets}Spin_{7} 
\stackrel{c_3,c_2}{\gets}pt. \\  \\
\stackrel{y_{1},y_{2}.y_{3}}{\gets}R_4\ (Rost\ motive)
\stackrel{a_0,...,a_3}{\gets}pt.
\end{cases} \] 

\begin{lemma}
For a subgroup $G''$ of $G'$, the graph 
$G' \stackrel{d_a}{\gets}... \stackrel{d_b}{\gets}G''$ means that
there is a natural map
\[ CH^* (R(\bG'(v)))/2 \to  \bZ/2\{d_a,...d_b\}
+  CH^*(R(\bG''(v))/2.\]
\end{lemma}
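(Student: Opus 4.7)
The plan is to imitate, at the mod-$2$ Chow level, the argument behind the cobordism statement of Lemma 5.2. First I would take the specialization induced by the field extension $K_{G'} \subset K_{G''}$,
$$sp_{CH}\colon CH^*(R(\bG'(v)))/2 \longrightarrow CH^*\bigl(R(\bG'(v))|_{K_{G''}}\bigr)/2,$$
and then identify the target via Theorem 4.2. Since the inclusions in the chain (1) are precisely those for which $i_P^*\colon P_{G'}(y)\to P_{G''}(y)$ is surjective, Theorem 4.2 yields
$$R(\bG'(v))|_{K_{G''}} \cong R(\bG''(v)) \otimes \mathrm{Ker}(i_P^*),$$
and $\mathrm{Ker}(i_P^*)$ is a sum of Tate motives whose Chow ring is the exterior algebra $\Lambda(y_a,\ldots,y_b)$ appearing as the relevant graded piece of $P_{G'}(y)/P_{G''}(y)$. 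Reducing modulo $2$ then gives
$$CH^*\bigl(R(\bG'(v))|_{K_{G''}}\bigr)/2 \cong CH^*(R(\bG''(v)))/2 \otimes \Lambda(y_a,\ldots,y_b)/2.$$

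Next I would identify each label $d_i$ of diagram (3) with an explicit class in $CH^*(R(\bG'(v)))/2$ whose image under $sp_{CH}$ involves the corresponding $y_c$ in the Tate factor above. The recipe is to reduce the invariant-ideal generators that label the edges of the cobordism diagram (2) modulo $(2,v_1,v_2,\ldots)$, using the coefficient change $CH^*(X)\cong \Omega^*(X)\otimes_{\Omega^*}\bZ_{(p)}$ recalled in Section 5.2. For example, a cobordism class $v_1 y_c$ reduces to a Chow class of the form $b_1 b_3$, $c_4$ or $c_5$, consistently with the explicit restrictions quoted at the end of Section 5.2. Because Lemma 5.2 already guarantees that $\Res_\Omega(R(\bG'(v)))$ lands in $J_a y_a + \cdots + J_b y_b + \Res_\Omega(R(\bG''(v)))$, the mod-$2$ Chow image of $sp_{CH}$ is contained in the $\bZ/2$-span of the $d_i$ together with the image of $CH^*(R(\bG''(v)))/2$, which is exactly the claim.

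The main obstacle is the passage from the cobordism statement of Lemma 5.2 to its Chow counterpart: reducing modulo $(v_1,v_2,\ldots)$ can coalesce or annihilate cobordism generators, so one has to verify edge by edge in diagram (3) that the listed elements $d_a,\ldots,d_b$ genuinely span the required complement rather than collapsing into $CH^*(R(\bG''(v)))/2$ or into zero. This case analysis rests on explicit computations of $CH^*(R(\bG'(v)))/2$ via the $J$-invariant (for the pairs $E_7 \supset Spin_{11}$, $Spin_{11}\supset Spin_7$, $Spin_7\supset Spin_5$, and the Rost-motive case), performed in the companion paper \cite{YaC}; the worked example $c_4, c_5\mapsto 0$ and $e_8\mapsto c_3 y_{10}$ from the introduction illustrates precisely this phenomenon.
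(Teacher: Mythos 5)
The paper offers no proof of this lemma at all: like Lemmas 5.1 and 5.2 it is stated as a \emph{definition} of the graph notation in diagram (3), with the justification left implicit in Theorem 4.2, the cobordism computations of 5.2, and the explicit calculations cited from [Ya2]. So your reconstruction cannot be checked against an argument in the text; it can only be checked against the worked examples. Measured that way, your overall framework (specialize, apply Theorem 4.2, identify the new classes from the invariant-ideal labels of diagram (2), verify edge by edge) is in the right spirit, and your candid flagging of the cobordism-to-Chow passage as the real difficulty is exactly where the content lies.

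There is, however, a concrete mismatch between the map you construct and the map the lemma asserts. You take the natural map to be $sp_{CH}\colon CH^*(R(\bG'(v)))/2 \to CH^*(R(\bG'(v))|_{K_{G''}})/2 \cong CH^*(R(\bG''(v)))/2\otimes\Lambda(y_a,\ldots,y_b)$. But the paper's own example has $c_4, c_5 \mapsto 0$ under this specialization (their cobordism restrictions are $v_1y$-type or $2y$-type classes, which die in $\Omega^*\otimes_{\Omega^*}\bZ/2$ of the target), while $c_4$, $c_5$, $c_2c_4$ appear as basis elements $d_i$ of the lemma's target $\bZ/2\{d_a,\ldots,d_b\}+CH^*(R(\bG''(v)))/2$; indeed in the $Spin_{11}\supset Spin_7$ example that target is all of $CH^*(R(Spin_{11}(v)))/2$. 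So the intended ``natural map'' is not $sp_{CH}$ but an additive decomposition of the source itself, obtained by restricting to $\bar K$ inside $\Omega^*\otimes P_{G'}(y)$ and sorting the image $\Res_{\Omega}(R(\bG'(v)))$ according to whether the monomials involve the new generators $y_a,\ldots,y_b$ (the Chow shadow of Lemma 5.2). Your map factors through a quotient in which several of the $d_i$ vanish, so it cannot detect them as independent classes, and the claim that its image is spanned by the $d_i$ plus $CH^*(R(\bG''(v)))/2$ is strictly weaker than what diagram (3) records. To repair this you should replace $sp_{CH}$ by $res_{\Omega}$ (followed by the splitting $P_{G'}(y)\cong\Lambda(y_a,\ldots,y_b)\otimes P_{G''}(y)$) and then quote the explicit assignments $c_4\mapsto v_1y_2$, $c_5\mapsto 2y_2$, $e_8\mapsto 2y_1y_2$, etc., from Section 5.2 to see that the listed $d_i$ are exactly the classes landing in the $y_a,\ldots,y_b$ part.
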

For example, we had seen 
\[  CH^* (R(Spin_{11}(v)))/2\cong 
\bZ/2\{c_4,c_5,c_2c_4,c_8\}+ \tilde CH^*(R(Spin_{7}(v))/2.\]

Here we explain the other elements in $CH^*(\bG'(v))$. 
When $G=E_7, G'=Spin_{11}$,  elements in $CH^*(\bG)$ are denoted by $b_i$.
For example, $b_1=c_2,b_2=c_3,b_3=c_5, $.  But $c_4=v_1y_2$ in 
$CH^*(\bG')$ does not come from  $CH^*(\bG)$. 
Moreover $c_2c_3=b_5\not =0$ in $CH^*(\bG)$.
We write down some elements in $CH^*(\bG((v))$ as 
\[ (b_4,b_1b_3)\mapsto (2,v_1^2)y_3, \ (b_6,b_1b_5)\mapsto (2,v_1^2)y_1y_3,\ (b_7,b_1b_4)\mapsto (2,v_1^3)y_2y_3.\]

When $G'=R_4$. we define
$ (a_0,a_1,a_2,a_3)\mapsto (2,v_1,v_2,v_3)y_4. $

As for $Kernel(CH^*(\bG)\to CH^*( R(\bG'))$ when $G'=E_7$, we  
know almost nothing.

\section{The exceptional group $E_8$, $p=3$.}
For $G'=E_7,p=3$, we see  
$ CH^*(\bar R(\bG'(v)))\cong P(y)'\cong \bZ/3[y]/(y^3)$, for $|y|=8$.  
For $G=E_8,p=3$, we see  
\[ CH^*(\bar R(\bG(v)))/3\cong P(y)/3\cong \bZ/3[y,y']/(y^3,(y')^3)\quad |y|=8, |y'|=20.\]
We have the diagram
\[ (1)\quad E_8  \stackrel{y', (y')^2, yy',y(y')^2, y^2(y'),(yy')^2}{\gets}E_7 
\stackrel{y, y^2}{\gets}pt.
 \] 

We find elements $b_1,...,b_8$ in $CH^*(\bG(v))$ (note $b_1,b_2,b_3$ in $CH^*(\bG'(v))$  such that
the restriction maps $res_{\Omega}$ are given as 
\[ (b_2,b_1) \to (3,v_1)\{y\},\quad 
   (b_3,b_1^2) \to (3,v_1^2)\{y^2\},\quad 
(b_4,b_1b_2-v_1b_3)\to (3,v_1^2)\{y'\},\]
\[ (b_5, b_1b_3) \to (3,v_1^2)\{yy'\},\quad 
   (b_6, b_1^2b_3) \to (3,v_1^3)\{y^2y'\},\quad 
(b_7, b_3^2-2v_1b_6)\to (3,v_1^2)\{(y')^2\},\]
\[ (b_8,b_1b_6) \to (3,v_1^2)\{y(y')^2\},\qquad
(b_2b_8,b_1b_8,b_1^2b_6)\to (9,3v_1,v_1^3)\{(yy')^2\}.\]
We have the diagram
\[ (2)\quad E_8  \stackrel{(3,v_1^2)y', (3,v_1^2)(y')^2, ...}
{\gets} E_7
\stackrel{(3,v_1)y, (3,v_1^2)y^2}{\gets}pt.
 \]

The Chow group $CH^*(\bG(v))/3$ seems quite complicated 
\[CH^*(\bG(v))/3\supset
\bZ/3\{1,b_1,b_2,...,b_8, \ b_1^2,b_1b_2,b_1b_3,b_1^2b_3,b_3^2,b_1b_6,b_1^2b_6,\ b_1b_8,b_2b_8\}.\]
The above inclusion seems far from an isomorphism.

\section{The exceptional group $E_8$, $p=5$}

Let $G=E_8$ and $p=5$.  Then
$P(y)/5\cong \bZ/5[y]/(y^5)$ with $|y|=12$.  Hence 
$R(\bG(v))$ is the  original  Rost motive $R_2$ and its Chow ring is  well known.  The graph for $P(y)$ and the restriction $res_{\Omega}$ are written   
\[ (1)\ \ E_8 \stackrel{y,y^2,...,y^{4}}{\longleftarrow} pt,\qquad 
(2)\ \ E_8 \stackrel{ (5,v_1)\{y,..., y^4\}}{\longleftarrow} pt.\]

We can take $e_1,...,e_4, f_1,...,f_4$ in $CH^*(R(\bG(v))$ such that
\[ e_i \mapsto 5y^i\quad f_i\mapsto v_1y^i\quad |e_i|=12i,\ |f_i|=12i-8.\]
Then $CH^*(\bG(v))/5\cong \bZ/5\{1,e_1,...,e_4,f_1,...,f_4\}.$
That is 
\[(3)\ \ E_8 \stackrel{ e_1,f_1,e_2,f_2,...,.e_4,f_4}{\longleftarrow} pt.\]

\section{The spin groups $Spin_{n}$, $p=2$}

Now we consider  
$Spin(2\ell+1)$.
Throughout this section, let $p=2$,  $G=Spin(2\ell+1)$.
First note that 
for $G''=Spin(2\ell+2)$, we know $R(\bG''(v))\cong R(\bG(v))$.
Hence we consider only $G=Spin(2\ell+1)$ here.
In this section, the suffixes mean twice of their degree.
(The elements $y_1,y_2,y_3,y_4$ in $\S 5$ are written by
$y_6,y_{10},y_{18},y_{30}$ in this section.)

It is known that 
\[ grP(y)\cong \otimes _{2i\not =2^j}\Lambda(y_{2i})\cong
\Lambda(y_6,y_{10},y_{12},...,y_{2\bar \ell})\]
where $\bar \ell=\ell-1$ if $\ell=2^j$ for some $j$, and
$\bar \ell=\ell$ otherwise.
Hence for $G'=Spin(2\ell'+1)$ the map $P_{G}(y)\to P_{G'}(y)$
in Theorem 4.2 is surjective.
\begin{lemma}  Let us write $\bG(v)$ by $\bS pin_n$ when $G=Spin_n$.  We have
\[ CH^*(R(\bS pin_{2\ell+1}|_{K'})/2\cong
CH^*(R(\bS pin_{2\ell'+1}))/2\otimes \Lambda(y_{2\bar \ell'+2},...,
y_{2\bar \ell}).\]
\end{lemma}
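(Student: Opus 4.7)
The plan is to deduce Lemma 8.1 as a direct application of Theorem 4.2 to the pair $G' = Spin_{2\ell'+1} \subset G = Spin_{2\ell+1}$. The only thing that needs to be verified on top of Theorem 4.2 is that the hypothesis ``$i_P^*: P_G(y) \to P_{G'}(y)$ is surjective'' holds for this inclusion, and then the kernel must be identified.

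First I would record the Borel--Toda description from just before the lemma:
\[ grP_{Spin_{2\ell+1}}(y) \cong \bigotimes_{2i \neq 2^j, \, 6 \le 2i \le 2\bar\ell} \Lambda(y_{2i}), \]
and the analogous formula for $Spin_{2\ell'+1}$. Since $\bar\ell' \le \bar\ell$, the generators of $grP_{G'}(y)$ form a subset of those of $grP_G(y)$, so granted that the standard inclusion $Spin_{2\ell'+1} \hookrightarrow Spin_{2\ell+1}$ sends the Borel--Toda generator $y_{2i}$ of the larger group to the corresponding $y_{2i}$ of the smaller group (and the remaining $y_{2i}$ to $0$), surjectivity of $i_P^*$ is immediate and its kernel is $\Lambda(y_{2\bar\ell'+2}, \ldots, y_{2\bar\ell})$. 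This naturality of the generators $y_{2i}$ under spin embeddings is the one point that needs a careful check; it is standard from the known $H^*(BSpin_n;\bF_2)$ calculations, where the classes $y_{2i}$ are pulled back from universal classes in the cohomology of the orthogonal group.

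Given the surjectivity, Theorem 4.2 yields the motivic isomorphism
\[ R(\bS pin_{2\ell+1}(v))|_{K'} \cong R(\bS pin_{2\ell'+1}(v)) \otimes \Ker(i_P^*), \]
where $\Ker(i_P^*)$, being the exterior algebra $\Lambda(y_{2\bar\ell'+2}, \ldots, y_{2\bar\ell})$ on the ``extra'' generators, is a sum of Tate motives. Passing to Chow rings modulo $2$ converts the motivic tensor product into a tensor product on Chow rings, producing exactly the stated isomorphism.

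The main (mild) obstacle I anticipate is verifying the compatibility of the Borel--Toda generators $y_{2i}$ under the standard spin embedding, so that the restriction $i_P^*$ really is given on generators by the identity on shared generators and zero on the extra ones. Once this is in place, no further calculation is needed beyond invoking Theorem 4.2 and reducing mod $2$.
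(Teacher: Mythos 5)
Your proposal is correct and follows essentially the same route as the paper: the paper's own (implicit) proof is exactly to note from the Borel--Toda description that $grP(y)\cong \Lambda(y_6,y_{10},y_{12},\ldots,y_{2\bar\ell})$, so that $i_P^*:P_G(y)\to P_{G'}(y)$ is surjective with complement $\Lambda(y_{2\bar\ell'+2},\ldots,y_{2\bar\ell})$, and then to invoke Theorem 4.2 and pass to Chow groups mod $2$. The mild abuse of writing $\Ker(i_P^*)$ for the tensor complement $P_G(y)/P_{G'}(y)$ is the paper's own convention (cf.\ the proof of Theorem 4.2 and Lemma 5.1), so your argument matches the intended one.
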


The Chow goup
 $CH^*(R(\bG'))/2$ is not computed yet
(for general $\ell$),
while we have the following lemmas.
\begin{lemma}
Let $2^t\le \ell<2^{t+1}$and $\bG(v)$ be versal. 
Then there  is a surjection
\[ \Lambda(c_2,...c_{\bar \ell})\otimes \bZ/2[e_{2^{t+1}}]
\twoheadrightarrow CH^*(R(\bG(v))) /2, \quad |e_j|=2j.\]
\end{lemma}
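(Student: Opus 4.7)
The plan is to construct the proposed generators explicitly from representation theory and then argue surjectivity by induction on $\ell$ using Lemma 8.1. First I would produce the classes. The vector representation $V$ of $G=Spin_{2\ell+1}$ (dimension $2\ell+1$) and the spin representation $S$ (dimension $2^{t+1}$, which is well-defined precisely because $2^t\le \ell<2^{t+1}$) each pull back to a vector bundle on $\bF=\bG(v)/B$. Their Chern classes $c_i(V)\in CH^i(\bF)$ and $c_{2^{t+1}}(S)\in CH^{2^{t+1}}(\bF)$ project, via the motivic splitting $M(\bF)\cong R(\bG(v))\otimes T$ of Theorem 3.1, to classes in $CH^*(R(\bG(v)))$; reducing modulo $2$ gives $c_2,\ldots,c_{\bar\ell}$ and $e_{2^{t+1}}$. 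Only $c_2,\ldots,c_{\bar\ell}$ need to be kept because Wu-type relations, combined with the existence of the spin class, allow higher $c_i(V)$ to be written as polynomials in the lower $c_j$ and $e_{2^{t+1}}$ modulo $2$, paralleling the known description of $H^*(BSpin_{2\ell+1};\bZ/2)$.

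Second, I would prove surjectivity by induction on $\ell$. The base case $\ell=3$ (so $G=Spin_7$) is explicit in the paper: $CH^*(R(\bG(v)))/2\cong \bZ/2\{1,c_2,c_3\}$ is the image of $\Lambda(c_2,c_3)\otimes \bZ/2[e_4]$, with $e_4$ and $c_2c_3$ both mapping to $0$. For the inductive step, set $G'=Spin_{2\ell'+1}$ with $\ell'=\ell-1$ (or $\ell'=2^t-1$ when $\ell=2^t$); since $P_G(y)\to P_{G'}(y)$ is surjective, Lemma 8.1 yields
\[ CH^*(R(\bG(v))|_{K'})/2 \;\cong\; CH^*(R(\bG'(v)))/2 \,\otimes\, \Lambda(y_{2\bar\ell'+2},\ldots,y_{2\bar\ell}). \]
By the inductive hypothesis the first factor is already hit by $\Lambda(c_2,\ldots,c_{\bar\ell'})\otimes \bZ/2[e_{2^{t'+1}}]$, while each extra exterior generator $y_{2i}$ (with $2\bar\ell'<2i\le 2\bar\ell$, $2i\ne 2^j$) is the restriction of the higher Chern class $c_i$ of the larger vector representation; when $\ell$ crosses a power of $2$ (so $t>t'$) the spin class $e_{2^{t+1}}$ of $Spin_{2\ell+1}$ supplies the extra generator that $e_{2^{t'+1}}$ cannot. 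This produces the desired surjection after base change to $K'$.

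The main obstacle is descending this surjection from $K'$ back to $K$. The generating classes are defined over $K$ (they are pullbacks from $BG$), so they lift automatically; the content is to show that every element of $CH^*(R(\bG(v)))/2$ lies in the span of these classes, not merely after base change. Since $\bG(v)$ is versal and $\bG(v)|_{K'}$ has $J$-invariant equal to that of $\bG'(v)$, Corollary 3.3 identifies $CH^*(R(\bG(v))|_{K'})$ with $CH^*(R(\bG'(v)))\otimes P(y^J)$, so one can track preimages under the restriction map and verify that they involve only $c_i$'s and $e_{2^{t+1}}$. A second technical point, which is where the hypothesis $2^t\le \ell<2^{t+1}$ really enters, is to confirm that the image of $e_{2^{t+1}}$ in $P(y)/2\cong CH^*(\bar R(\bG(v)))/2$ is not already in the subring generated by the images of the $c_i$'s, so that $e_{2^{t+1}}$ is genuinely a new generator rather than being absorbed into the exterior part; establishing the precise form of these restrictions in $P(y)/2$ is the technical heart of the argument.
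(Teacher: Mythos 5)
You should first be aware that the paper itself offers no proof of this lemma: it is stated bare in Section 8 and is imported from [Ya2] and [Ya3], so I can only judge your argument on its own terms, and it has two genuine gaps. The first concerns the construction of $e_{2^{t+1}}$. The spin representation of $Spin_{2\ell+1}$ has dimension $2^{\ell}$, not $2^{t+1}$, so the parenthetical "well-defined precisely because $2^t\le \ell<2^{t+1}$" is false and $e_{2^{t+1}}$ is not the top Chern class of a $2^{t+1}$-dimensional spin bundle (already for $Spin_{11}$ the spin representation is $32$-dimensional while $e_8$ lies in $CH^8$ and restricts to $2y_6y_{10}$). More importantly, the ingredient that makes any surjectivity claim onto the whole of $CH^*(R(\bG(v)))/2$ accessible is missing: for a \emph{versal} torsor, Karpenko's theorem [Ka1] says that $CH^*(\bF(v))$ is generated by its degree-one part $S(t)$, and the real content of the lemma is that, modulo the Tate part, the image of $S(t)$ in $P(y)\otimes S(t)/(b)$ is generated by the particular polynomials $c_i$ and $e_{2^{t+1}}$ -- this is where the structure of $H^*(BSpin_n;\bZ/2)$ and the hypothesis $2^t\le\ell<2^{t+1}$ actually enter. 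Without some such global generation statement, exhibiting candidate classes cannot prove that they generate.

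The inductive step is where the argument fails outright. Lemma 8.1 describes $CH^*(R(\bG(v))|_{K'})/2$ \emph{after base change}, and surjectivity of your map onto that ring says nothing about classes killed by $res'\colon CH^*(R(\bG(v)))/2\to CH^*(R(\bG(v))|_{K'})/2$. Lemma 8.3 records that $\Ker(res')$ contains the ideal $(c_{\bar\ell'+1},\dots,c_{\bar\ell})$, so $res'$ is very far from injective; elements of the kernel have no preimages to "track", and even exact knowledge of the kernel as an ideal would not place its elements in the \emph{subring} generated by the $c_i$ and $e_{2^{t+1}}$ without a separate induction on degree that you do not set up. You identify this obstacle yourself, but the proposed fix via Corollary 3.3 only re-describes the target of $res'$, not its kernel. (A smaller slip: mod $2$ the classes $y_{2i}$ are not restrictions of the $c_i$ in the versal case -- the $c_i$ restrict to $2y_{2i}$, $v_1y_{2j}$, etc., as in diagram (2) of Section 8 -- so "each extra exterior generator $y_{2i}$ is the restriction of $c_i$" is not literally true.) As written, your induction proves at best that the composite into $CH^*(R(\bG(v))|_{K'})/2$ is surjective, which is strictly weaker than the lemma.
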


\begin{lemma}  Let us write the restriction 
$res': CH^*(R(\bG(v))
 /2\to 
CH^*(R(\bG(v))|_{K'})/2$.
Then we have \[Ker(res')\supset Ideal(c_{\bar \ell'+1},...,c_{\bar \ell}).\]
\end{lemma}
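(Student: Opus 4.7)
Since $Ker(res')$ is an ideal, it suffices to verify that each generator $c_i$ with $\bar\ell'< i\le\bar\ell$ is sent to $0$ by $res'$. The plan is to combine Lemma 8.1 (to identify the target) with the algebraic-cobordism picture of the $c_i$ suggested by Lemma 5.3 and the tables of Section 5.2. By Lemma 8.1 we have
\[
CH^*(R(\bG(v))|_{K'})/2 \;\cong\; CH^*(R(\bG'(v)))/2 \otimes \Lambda(y_{2\bar\ell'+2},\ldots,y_{2\bar\ell}),
\]
and by Theorem 4.2 together with Lemma 4.1 the exterior generators $y_{2j}$ appearing here are honest Chow classes of $R(\bG(v))|_{K'}$ (i.e.\ already in the image of $res_\Omega$ for that motive), rather than merely $2$-torsion shadows in $CH^*(\bar R(\bG))$ as in the versal case.

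Now fix $c_i$ with $i>\bar\ell'$. In the versal case, each $c_i$ is constructed so that its lift to $\Omega^*(R(\bG(v)))$ has restriction to $\Omega^*(\bar R(\bG))$ lying in the invariant ideal $(2,v_1)y_{2i}$, modulo decomposables: this is the pattern visible in Section 5.2 and in the $Spin_7$, $Spin_{11}$ discussion following Lemma 8.2. By naturality of the restriction map with respect to the motive decomposition of Theorem 4.2, the image $res_\Omega(res'(c_i))$ still lies in $(2,v_1)y_{2i}$, now viewed inside $\Omega^*(\bar R(\bG'(v)))\otimes \Lambda(y_{2\bar\ell'+2},\ldots,y_{2\bar\ell})$. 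But for $i>\bar\ell'$ the element $y_{2i}$ has already been promoted to a genuine $\Omega^*$-class by the identification above, so $(2,v_1)y_{2i}$ is the product of the ideal $(2,v_1)\subset\Omega^*$ with an honest class. Passing to $CH^*$ (setting $v_1=0$) and then reducing mod $2$ kills both the $v_1 y_{2i}$ and the $2y_{2i}$ contributions, so $res'(c_i)=0$, as required.

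The main obstacle is the uniform assertion $res_\Omega(c_i)\in(2,v_1)y_{2i}$ modulo decomposable terms $y_{2a}y_{2b}$ with $a+b=i$, for arbitrary $\ell$: the text verifies this only for the small spin groups. The natural approach is to propagate the computation along the standard chain $Spin_{2\ell+1}\supset Spin_{2\ell-1}\supset\cdots$ using Lemma 5.3-type restriction maps together with induction, while taking care with decomposable contributions and with the top class $e_{2^{t+1}}$ of Lemma 8.2 (which is not among the $c_i$ and therefore enters the target only through possible relations, rather than as a competing preimage). A secondary delicate point is ensuring that the compatibility of $\Omega^*$ with the Tate-motive decomposition of Theorem 4.2 is strong enough to identify the generator $y_{2j}$ on the two sides on the nose, not just up to lower-order $\Omega^*$-corrections that could reintroduce nonzero mod-$2$ terms.
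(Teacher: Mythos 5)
The paper states this lemma with no proof at all, so there is nothing of the author's to compare your argument against; I can only judge the proposal on its own terms. Your strategy --- reduce to the ring generators $c_i$, lift to algebraic cobordism, use that $res_{\Omega}(c_i)$ is $2$ or $v_1$ times a generator $y_{2j}$ with $j>\bar \ell'$, and then invoke Lemma 8.1 / Theorem 4.2 to see that these $y_{2j}$ become generators of genuine Tate summands of $R(\bG(v))|_{K'}$, so that both $2y_{2j}$ and $v_1y_{2j}$ die in $CH^*(R(\bG(v))|_{K'})/2$ --- is the natural one, and it is consistent with every computation displayed in Sections 5.2, 5.3 and 8 (e.g.\ $c_4\mapsto 0$, $c_5\mapsto 2y_2$ for $Spin_{11}\supset Spin_7$).

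Nevertheless the proof is not complete, and the gap is genuine rather than cosmetic. (i) The key input, that $res_{\Omega}(c_i)$ lies in $(2,v_1)\{y_{2i}\}$ modulo decomposables for \emph{all} $\ell$, is exhibited in the paper only for small spin groups; as written, your argument reduces the lemma to this unproven general statement instead of establishing it (you acknowledge this yourself). Note also that the formula is literally false when $i$ is a power of $2$: then $y_{2i}$ does not exist and the examples give $c_i\mapsto v_1y_{2i+2}$ (e.g.\ $c_4\mapsto v_1y_{10}$, $c_8\mapsto v_1y_{18}$); the conclusion survives because $i+1>\bar\ell'$ still holds, but this case must be separated and the existence of $y_{2i+2}$ checked. (ii) The decomposable terms cannot be waved away: a decomposable contribution not itself lying in $(2,v_1,\dots)$ times liftable classes would survive in the target --- compare $e_8\mapsto c_3y_{10}\neq 0$ in $CH^*(R(\bS pin_{11})|_{K_{Spin_7}})/2$ --- so you must show that the \emph{entire} restriction of $c_i$, not just its leading term, lies in $\Omega^{<0}\cdot(\text{classes lifting to }\Omega^*(R(\bG(v))|_{K'}))+2\cdot(\dots)$. (iii) Passing from an identity in $\Omega^*(\bar R(\bG(v)))$ to the vanishing of $c_i$ in $CH^*(R(\bG(v))|_{K'})/2$ requires that $c_i|_{K'}$ itself, as an element of $\Omega^*(R(\bG(v))|_{K'})$, equal $2\tilde y+v_1\tilde z+\cdots$; this needs injectivity of $res_{\Omega}$ on that motive (or a substitute controlling its kernel), which is a standing hypothesis of the $Res_{\Omega}$ formalism of [Ya4] and should be cited explicitly rather than used silently.
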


For $\ell\le 10$. we have the diagram
\[(1)\quad Spin_{21}\stackrel{y_{20}}{\gets} 
Spin_{19}\stackrel{y_{18}}{\gets}
Spin_{17}\stackrel{0}{\gets}
Spin_{15}\stackrel{y_{14}}{\gets}
Spin_{13}\stackrel{y_{12}}{\gets} Spin_{11}.\]
Here we note 
\[R(\bS pin_{17})|_{K(\bS pin_{15})}
\cong R(\bS pin_{15}) \quad  (but  \quad R(\bS pin_{17})\not 
\cong 
R(\bS pin_{15})).  \]

Next, we consider elements in $\Omega^*\otimes CH^*(\bar X)$
\[(2)\ \ Spin_{21}\stackrel{2y_{20+...}}{\gets} 
Spin_{19}\stackrel{(2,v_1)y_{18}+...}{\gets}
Spin_{17}\stackrel{0}{\gets}
Spin_{15}\stackrel{2y_{14},...}{\gets} \]\[
Spin_{13}\stackrel{2y_{12},(4,2v_1,v_1^2)y_6y_{12},...}{\gets} Spin_{11}.\]
As for restrictions. we see 
\[ c_{10}\mapsto 2y_{20},\quad (c_{9},c_{8}) \mapsto (2,v_1)y_{18},\quad
c_7\mapsto 2y_{14} \quad c_6\to 2y_{12},\quad\]
\[  (c_6c_3,c_6c_2,c_3c_4-v_1e_8)\mapsto (4,2v_1,v_1^2)y_6y_{12}.\]
Thus we can write down some elements in Chow groups
\[(3)\quad Spin_{21}\stackrel{c_{10}}{\gets} 
Spin_{19}\stackrel{c_{9},c_8, e_{16}}{\gets}
Spin_{17}\stackrel{0}{\gets}
Spin_{15}\stackrel{c_{7},...}{\gets}  Spin_{13}\stackrel{c_{6},c_6c_2,c_6c_3,...}{\gets} Spin_{11}.\]
We can see 
$CH^*(R(\bS pin_{21}))/2\supset
\bZ/2\{1,c_{10},c_9,c_8,c_7,...,c_2,c_6c_2,c_6c_3,c_3c_4\}.$ \\
But the above inclusion is far from the isomorphism.

Here we note a counterexample to the Karpenko's conjecture
[Ka2.3].[Ya3].
We consider 
\[ x=c_2c_3c_6c_7\quad \in\ CH^*(R(\bS pin_{17})/2.\]
This element is still defined in  $ CH^*(\bS pin_{15})/2$ but
it is zero there.  We can prove 
\[   x\not =0 \in CH^*(R(\bS pin_{17}))/2\quad but \quad
 x=0\in grK^0(R(\bS pin_{17}))/2.\]
Here $grK^0(X)$ is the graded ring by the gamma filtration of the algebraic $K$-theory $K^0(X)$.

	\section{the field $\bR$ of real numbers}

Let $\bR$ be the field of the real numbers.  
Let $\bG(\bR)$ be a nonzero $G_k$-torsor over $\bR$.  Then
the motivic cohomology for $Spec(\bR)$ is written  
\[  H^{*,*'}(Spec(\bR);\bZ/2)\cong \bZ/2[\tau,\rho],\quad deg(\rho)=(1,1), \ deg(\tau)=(0,1).\]
It is known [Ya1]
\[ CH^*(R_2(\bR))/2\cong \bZ/2\{1,\tau^{-2}\rho^4,\tau^{-3}\rho^6\}\subset \bZ/2[\tau,\rho][\tau^{-1}]. \]
Writing $c_2=\tau^{-2}\rho^4,$  $c_3=\tau^{-3}\rho^6$, we have
\begin{lemma}  ([Ya1]) Let $G=Spin_7$.  Then
   \[ R(\bG(\bR))\cong R(\bG(v))\cong  R_2(\bR)\ (Rost\ motive).\]
  \[ 
 CH^*(R(\bG(\bR))\cong \bZ/2\{c_2\}\oplus \bZ_{(2)}
\{1,c_3\}.\]
\end{lemma}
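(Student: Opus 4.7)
The plan is to identify the generalized Rost motive $R(\bG(\bR))$ with the classical Rost motive $R_2(\bR)$ attached to the anisotropic $3$-fold Pfister form $\langle\langle -1,-1,-1\rangle\rangle$, and then read off its integral Chow ring from the known mod-$2$ motivic-cohomology description over $\bR$. First I would recall that for $G=Spin_7$ at $p=2$ one has $P_G(y)\cong \Lambda(y_1)$ with $|y_1|=6$, so $\bar R(\bG)$ is additively $\bZ_{(2)}\{1,y_1\}$ concentrated in Chow codimensions $0$ and $3$---exactly the shape of $R_2$. The non-trivial $Spin_7$-torsor over $\bR$ is classified by $\langle\langle -1,-1,-1\rangle\rangle$, which is anisotropic over $\bR$, so its $J$-invariant is maximal, $J(\bG(\bR))=(1)=J(\bG(v)|_{\bR})$. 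Corollary 3.3 (whose tensor factor $P(y^J)=\bZ_{(2)}$ is trivial here) then yields $CH^*(R(\bG(v))|_{\bR})\cong CH^*(R(\bG(\bR)))$, and Rost's classical motivic decomposition of Pfister quadrics identifies both with $R_2(\bR)$.

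For the Chow ring, I would start from the mod-$2$ description
\[ CH^*(R_2(\bR))/2\cong \bZ/2\{1,\tau^{-2}\rho^4,\tau^{-3}\rho^6\} \]
recalled from [Ya1] and lift to $\bZ_{(2)}$-coefficients via the Bockstein in $\bR$-motivic cohomology. With $c_2=\tau^{-2}\rho^4\in CH^2$ and $c_3=\tau^{-3}\rho^6\in CH^3$, the point is that $c_3$ restricts to the free generator $y_1\in CH^3(\bar R_2)\cong \bZ_{(2)}$ and so must lift to a torsion-free class, whereas $c_2$ restricts to $0$ in $CH^*(\bar R_2)$ (which vanishes in codimension $2$), hence is pure $2$-torsion, and the Bockstein pins it down as exactly $\bZ/2$. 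Integral classes outside codimensions $0,2,3$ are excluded by the same restriction comparison with $\bar R_2$.

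The main obstacle is verifying the integral lift cleanly: that $c_3$ generates a free $\bZ_{(2)}$-summand (rather than some cyclic quotient $\bZ_{(2)}/2^k$), and that no unexpected integral classes hide in other codimensions. The first is handled by restriction to $\bC=\bar\bR$, where $R_2(\bC)$ is a sum of Tate motives with $CH^3\cong \bZ$; a $\bZ_{(2)}$-lift of $c_3$ must restrict to a generator there and so cannot be torsion. The vanishing of $CH^i$ for $i\notin\{0,2,3\}$ then follows from the mod-$2$ computation together with the Bockstein exact sequence and the concentration of $\bar R_2$ in codimensions $0$ and $3$.
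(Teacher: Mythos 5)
The paper gives no proof of this lemma: it is quoted from [Ya1], with the needed input $CH^*(R_2(\bR))/2\cong\bZ/2\{1,\tau^{-2}\rho^4,\tau^{-3}\rho^6\}$ recalled immediately above it. Your reconstruction follows exactly the route that citation intends (identify the nontrivial torsor with the anisotropic $3$-fold Pfister form so that $R(\bG(\bR))$ is the classical Rost motive $R_2(\bR)$, match it with the versal case via the $J$-invariant and Corollary 3.3, then lift the mod-$2$ answer integrally), and the overall strategy and conclusion are sound. Two steps need repair, though. (a) Your claim that ``$c_3$ restricts to the free generator $y_1\in CH^3(\bar R_2)$'' is wrong and, taken literally, would force $J(\bG(\bR))=(0)$ and a split motive, contradicting your own earlier step; the correct statement (compare Section 5.2, where $Res_{\Omega}(R(Spin_7(v)))\cong (2,v_1)y_1\oplus\Omega^*$) is $c_3\mapsto 2y_1$. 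Since $2y_1$ is still non-torsion, the conclusion that $c_3$ spans a free $\bZ_{(2)}$-summand survives the correction, but the image of restriction in codimension $3$ is $2\bZ_{(2)}$, not all of $\bZ_{(2)}$. (b) The identification $CH^2\cong\bZ/2$ rather than $\bZ/2^k$ does not follow from the mod-$2$ Chow group ``plus the Bockstein'' as vaguely invoked: from $CH^*/2$ alone the order of the cyclic torsion group is undetermined. One needs either the full bigraded mod-$2$ motivic cohomology over $\bR$ (which is what [Ya1] actually computes, and where the class $\tau^{-2}\rho^4$ lives), or the cobordism presentation: taking $\tilde c_2,\tilde c_3\in\Omega^*(R_2)$ with $res_{\Omega}(\tilde c_2)=v_1y_1$ and $res_{\Omega}(\tilde c_3)=2y_1$, injectivity of $res_{\Omega}$ on the Rost motive yields $2\tilde c_2=v_1\tilde c_3$, and hence $2c_2=0$ in $CH^*=\Omega^*(R_2)\otimes_{\Omega^*}\bZ_{(2)}$.
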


We note that the map from Chow ring to the etale cohomology  
\[CH^*(R_n(\bR))/2\to H_{et}^{2*}(R_n(\bR);\bZ/2)
\] is injective (Corollary 1.2, Theorem 1.3 in \cite{YaS})
for the Rost motive $R_n$.


For $G=Spin_{11}$, we have $P(y)\cong \Lambda(y_6,y_{10})$.
There is the map $CH^*(R(\bG(v))\to CH^*(R(\bG(\bR))$.
The problem is $y_{10}\in Res(R(\bG(\bR)))$ or not.  Hence we see \\
$ CH^*(R(S pin_{11}(\bR))/2$ is isomorphic to
\[ CH^*(R(\bS pin_{11}))/2
\quad or \quad 
  CH^*(R(\bS pin_{7})/2\otimes \Lambda(y_{10}).\]

We see that  the first case does not happen by the following reason.
A quadratic form $\phi$ (and its quadric) is called excellent 
if for all extension $K$ of $k$, the anisotropic part is 
defined over $k$.  Hence when $k=\bR$, all quadrics are excellent.  It is known that the motive $M$ of the excellent quadric
$X_{\phi}$ is written as a sum of (original) Rost motives [Ro].
(A quadric of trivial discriminant and Cliford invariant for the maximal quadratic grassmannian 
corresponds $Spin$-torsor   [Ka1,2].) 
Therefore we see
\begin{lemma}  
   We have the isomorphism
\[ CH^*(R(S pin_{11}(\bR))/2
\cong  
  CH^*(R(\bS pin_{7})/2\otimes\Lambda(y_{10}).\]
\end{lemma}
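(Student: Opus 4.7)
The plan is to exclude the first (``versal-like'') alternative in the displayed dichotomy by combining the Karpenko correspondence between $Spin$-torsors and quadratic forms with the excellence of real quadratic forms and Rost's theorem on motives of excellent quadrics. Concretely, by [Ka1, Ka2] the non-trivial $Spin_{11}$-torsor $\bG(\bR)$ corresponds, via its maximal orthogonal Grassmannian, to a quadratic form $\phi$ over $\bR$ of trivial discriminant and trivial Clifford invariant. Since over $\bR$ every anisotropic form is (up to hyperbolic summands) $\pm\langle 1,\ldots,1\rangle$, the anisotropic kernel of $\phi_K$ is already defined over $\bR$ for every extension $K/\bR$, so $\phi$ is excellent in the sense of Knebusch.

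Rost's theorem on excellent quadrics then decomposes $M(X_\phi)$ as a direct sum of shifted original Rost motives $R_n$. The complete flag variety $\bF(\bR)=\bG(\bR)/B$ is connected to $X_\phi$ by a tower of projective bundles and partial orthogonal Grassmannians, so every indecomposable non-Tate summand of $M(\bF(\bR))$ is again, up to Tate shifts, an original Rost motive. Comparing with the Petrov--Semenov--Zainoulline decomposition $M(\bF(\bR))\cong R(\bG(\bR))\otimes T$ forces the indecomposable non-Tate summand $R(\bG(\bR))$ itself to have the shape of an original Rost motive.

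In case (a) one would have $\bar R(\bG(\bR))\cong P(y)/2\cong\Lambda(y_6,y_{10})$, which carries \emph{two} exterior generators; equivalently $y_{10}\notin\operatorname{Res}(R(\bG(\bR)))$. However, an original Rost motive has $\bar R$ with a single exterior generator, so case (a) is incompatible with the Rost decomposition established in the previous paragraph. We must therefore be in case (b): $y_{10}$ lies in $\operatorname{Res}(R(\bG(\bR)))$, the $J$-invariant reduces to $J(\bG(\bR))=(1,0)$, and the claimed isomorphism $CH^*(R(\bS pin_{11}(\bR)))/2\cong CH^*(R(\bS pin_7))/2\otimes\Lambda(y_{10})$ follows from the preceding lemmas of the section (in particular the second possibility listed before the statement).

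The main obstacle will be the passage from Rost's decomposition of $M(X_\phi)$ to the corresponding statement about the non-Tate summand of $M(\bF(\bR))$: one must check that the projective-bundle / partial-flag tower connecting the quadric to the complete flag variety preserves the shape of the indecomposable non-Tate summand, so that no ``versal-type'' motive can appear at the top. Once this propagation is secured, the mismatch in the number of exterior generators between a single original Rost motive and $\Lambda(y_6,y_{10})$ immediately rules out case (a) and forces case (b).
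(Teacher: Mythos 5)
Your proposal is correct in substance and follows the paper's overall strategy---excellence of all quadratic forms over $\bR$ plus Rost's theorem that motives of excellent quadrics decompose into original Rost motives---but the decisive step that kills the versal alternative is genuinely different. The paper's proof is a cobordism computation: in the versal case $CH^*(R(\bS pin_{11}))/2$ contains the nonzero class $c_2c_4$ with $res_{\Omega}(c_2c_4)=v_1^2y_6y_{10}$ (quoted from 5.3), and no Chow class of a sum of original Rost motives can restrict to such an element, since for each Rost summand the Chow classes restrict only to combinations of $py, v_1y,\dots,v_{n-1}y$ for a single generator $y$. You instead count Tate cells: an indecomposable summand that is a shifted original Rost motive at $p=2$ has $\bar R$ of rank $2$ (one exterior generator), whereas the versal alternative would force $\bar R(\bG(\bR))\cong\Lambda(y_6,y_{10})$ of rank $4$; together with the indecomposability of $R(\bG(\bR))$ from Petrov--Semenov--Zainoulline this is a contradiction. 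Both arguments are valid granted the step you rightly flag as the main obstacle---propagating Rost's decomposition of $M(X_\phi)$ to the assertion that the non-Tate indecomposable summand of $M(\bF(\bR))$ must itself be an original Rost motive; the paper needs exactly the same propagation and does not justify it either, so you are not worse off there. Your rank count is more elementary (no algebraic cobordism needed) and suffices for this lemma; the paper's finer $Res_{\Omega}$ criterion is what powers the subsequent Corollary, ruling out the versal Chow ring for general $Spin_{2\ell+1}$ whenever $c_2c_4\neq 0$---a situation that a rank comparison alone would not always detect.
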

\begin{proof} Chow ring of the (original) Rost motive is written by the sum of $py,v_1y,...,v_{n-1}y$ in $Res _{\Omega}$.
From 5.3, we see $0\not =c_2c_4\mapsto
v_1^2y_6y_{10}$.  Hence this element is not in of the
sum of  the original Rost modules.
\end{proof}

\begin{cor}
Let $G=Spin_{2\ell+1}$.  
If we have $c_2c_4\not=0\in CH^*(R(\bG(v)|_L)$
for some extension $L$ over $K$,  then for any nontrivial $\bG(\bR)$, we have  
\[CH^*(R(\bG(v)|_{L})/2 \not \cong CH^*(R(\bG(\bR))/2.\] \end{cor}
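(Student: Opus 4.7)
The plan is to generalize the argument of Lemma 9.2 from $Spin_{11}$ to arbitrary $Spin_{2\ell+1}$, combining the excellent-quadric mechanism on the $\bR$-side with the $Res_\Omega$ computations of \S5.

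First I would invoke excellence: every quadratic form over $\bR$ is excellent, so by Rost's theorem the motive of the maximal quadratic Grassmannian attached to $\bG(\bR)$ decomposes as a direct sum of (original) Rost motives $R_{n_i}$. Thus the indecomposable $R(\bG(\bR))$ is one of these summands, and $CH^*(R(\bG(\bR)))/2$ sits, compatibly with restriction to $\bC$ and with the cobordism filtration, inside a direct sum of Chow rings of original Rost motives. Next I would recall from \S5.2 that for an original Rost motive $R_n$, $Res_\Omega(R_n)$ is contained in the $\Omega^*$-span of a single generator $y_n$, namely $\Omega^*\oplus (p,v_1,\dots,v_{n-1})\{y_n\}$. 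Hence in a direct sum of such motives $Res_\Omega$ is itself a direct sum of one-generator pieces, and no element can have image a nonzero two-generator product $y_a y_b$ with $a\neq b$: products of classes lying in distinct Rost summands vanish tautologically.

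Now the hypothesis enters. By the computations in \S5.2--5.3, the nonzero class $c_2c_4 \in CH^*(R(\bG(v))|_L)/2$ restricts in $Res_\Omega$ to a two-generator product monomial of the form $v_1^2 y_a y_b$ with $a\neq b$ (this is exactly the forbidden shape that ruled out the versal case for $Spin_{11}$ in Lemma 9.2). A ring isomorphism $CH^*(R(\bG(v))|_L)/2 \cong CH^*(R(\bG(\bR)))/2$ compatible with the canonical restrictions to algebraic cobordism of the base change to the algebraic closure would therefore carry $c_2c_4$ to a nonzero class whose $Res_\Omega$-image is this forbidden monomial, contradicting the previous paragraph.

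The main obstacle is making the last step free of hidden compatibilities. A purely abstract graded ring isomorphism need not respect $res_\Omega$ or the topological filtration, so one must first argue that these are canonically determined by ring-theoretic data. The natural remedy is to observe that both Chow rings are generically split with identical base change to the algebraic closure, namely $P(y)\otimes S(t)/(b)$; hence $res_{CH}$ and the filtration it induces are canonical, and transporting the argument to algebraic cobordism via $\Omega^*(X)\otimes_{\Omega^*}\bZ_{(p)} \cong CH^*(X)$ makes the contradiction intrinsic. Once this identification is in place, steps one and two close the argument automatically.
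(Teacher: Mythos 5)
Your proposal is correct and follows essentially the same route as the paper: the corollary is deduced, exactly as in the proof of Lemma 9.2, from the excellence of all quadratic forms over $\bR$ (so that $R(\bG(\bR))$ is assembled from original Rost motives, whose $Res_{\Omega}$ involves only a single generator $y$ via $py,v_1y,\dots,v_{n-1}y$) combined with the fact that $c_2c_4$ restricts to $v_1^2y_6y_{10}$, a product of two distinct generators. Your closing concern about whether an abstract ring isomorphism must be compatible with $res_{\Omega}$ is a genuine looseness, but the paper itself passes over it in silence, so your argument is at least as complete as the one it replaces.
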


{\bf Remark.}  For $G=Spin_{2\ell+1}$, $\ell>5$, it is known
the torsion index $t(G)\ge 4$.  Hence 
$CH^*(R(\bG(v))|_{\bR})\not \cong CH^*(R(\bG(\bR)\otimes \Lambda(y)$.

We prove the similar (and stronger) facts for the etale cohomology
without arguments of excellent quadrics.

\begin{lemma}  Let $G=Spin_{11}$ and $G'=Spin_{7}$.  Then
\[H_{et}^*(R(\bG(\bR);\bZ/2)\cong H_{et}^*(R(\bG'(\bR));\bZ/2)\otimes \Lambda(y_{10}).\]
\end{lemma}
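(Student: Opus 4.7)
The plan is to derive the splitting in étale cohomology directly from Corollary 4.3, thereby bypassing the excellent-quadric input used in Lemma 9.2. The target is a motivic isomorphism over $\bR$ of the form $R(\bG(\bR)) \cong R(\bG'(\bR)) \otimes \Lambda(y_{10})$, from which the étale-cohomological splitting follows by applying $H_{et}^*(-;\bZ/2)$ and factoring out the Tate summand $\Lambda(y_{10})$.

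First I would produce the input required by Corollary 4.3: an irreducible $\bR$-motive $R''$ together with a map $R(\bG(\bR)) \to R''$. The natural choice is $R'' = R(\bG'(\bR))$ for a nontrivial $Spin_7$-torsor $\bG'(\bR)$ obtained from $\bG(\bR)$ by reduction of structure group along $Spin_7 \subset Spin_{11}$; the argument of Lemma 4.1 then adapts to produce the required map of motives. To verify the surjectivity hypothesis, I would use that under the Borel--Toda identifications $\bar R(\bG(\bR)) \cong P_{Spin_{11}}(y) \cong \Lambda(y_6, y_{10})$ and $\bar R(\bG'(\bR)) \cong P_{Spin_7}(y) \cong \Lambda(y_6)$, the induced map $i_P^*$ is the natural projection killing $y_{10}$, which is manifestly surjective with kernel $\Lambda(y_{10})$.

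If the necessary reduction of structure group along $Spin_7 \subset Spin_{11}$ is not available over $\bR$ for the given torsor, I would instead apply Corollary 4.3 over a small extension $K''/\bR$ where such a reduction exists, obtaining the étale-cohomological splitting over $K''$, and then descend to $\bR$ using the generic splitness of both $R(\bG(\bR))$ and $R(\bG'(\bR))$ (Remark after Theorem 3.1) together with an étale analogue of the Vishik--Zainoulline descent statement (Theorem 2.1 in [Vi-Za]) used in the proof of Corollary 3.3.

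The main obstacle I anticipate is precisely this final descent step. The étale coefficient ring $H_{et}^*(Spec(\bR); \bZ/2) = \bZ/2[\rho]$ is strictly larger than its counterpart over an algebraically closed field, and the Vishik--Zainoulline descent statement available to us is formulated for Chow groups only. Resolving this will likely require either applying the motivic-cohomology generalization of [Vi-Za] and then inverting $\tau$, or directly tracking the $\bZ/2[\rho]$-module structure through the putative splitting; verifying that no nontrivial $\rho$-twist obstruction mixes $R(\bG'(\bR))$ with the extra $\Lambda(y_{10})$ factor is where the bulk of the technical work will lie.
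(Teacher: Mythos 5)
Your strategy is genuinely different from the paper's, and it has an unresolved gap at exactly the point you flag. The paper does \emph{not} go through any motivic decomposition here: it runs the Borel (Hochschild--Serre) spectral sequence $E_2\cong H^*(B\bZ/2;\bZ/2)\otimes H_{et}^*(X|_{\bC};\bZ/2)\Rightarrow H_{et}^*(X;\bZ/2)$ directly for $X=R(\bG'(\bR))$ and $X=R(\bG(\bR))$. Finiteness of $H_{et}^*$ forces $d_7(y_6)=\rho^7$ for $Spin_7$, giving $H_{et}^*(R(\bG'(\bR));\bZ/2)\cong\bZ/2[\rho]/(\rho^7)$; naturality along $G'\subset G$ gives the same differential for $Spin_{11}$, and dimension counting shows $y_{10}$ survives, yielding $\bZ/2[\rho]/(\rho^7)\otimes\Lambda(y_{10})$. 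This is why the paper announces the lemma as proved ``without arguments of excellent quadrics'': it is an independent computation, not a consequence of a motivic splitting.

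The concrete problems with your route are these. First, Lemma 4.1's construction of the map $R(\bG(v))|_{K'}\to R(\bG'(v))$ uses the pullback description of \emph{versal} torsors along $GL_N\to GL_N/G_k$; for a fixed nontrivial torsor over $\bR$ you must actually exhibit a reduction of structure group to $Spin_7$, which you do not do. Second, your fallback of passing to an extension $K''$ and descending cannot work with the tools cited: the only nontrivial algebraic extension of $\bR$ is $\bC$, over which both $R(\bG(\bR))$ and the candidate decomposition become the same sum of Tate motives, so the Vishik--Zainoulline splitting lemma (which compares a motive with its form over $\bar K$) cannot distinguish the two possibilities the paper lists just before Lemma 9.2 --- namely whether $y_{10}$ lies in the image of restriction or not. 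That dichotomy is precisely the content to be decided, and deciding it requires an extra input: either the excellence of real quadrics (paper's Lemma 9.2) or the spectral-sequence differential (paper's Lemma 9.5). Finally, even granting a splitting over some $K''\ne\bR$, $H_{et}^*(-;\bZ/2)$ over $K''$ loses the $\bZ/2[\rho]$-module structure that the statement over $\bR$ is about, so the ``$\rho$-twist obstruction'' you defer is not a technicality but the whole theorem. To repair the proof, replace the motivic argument by the paper's spectral sequence computation.
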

\begin{proof}

Consider the Borel spectral sequence
\[ E_2^{*,*'} \cong H^*(B\bZ/2;\bZ/2)
\otimes H_{et}^{*'}(X|_{\bC};\bZ/2)  \Longrightarrow 
H_{et}^{*}(X;\bZ/2)\]
for $X=R(\bG(\bR))$ or $X=R(\bG'(\bR))$.
  Here 
$ H^*(B\bZ/2;\bZ/2)\cong \bZ/2[\rho],\ |\rho|=1$, and 
\[H_{et}^*(X|_{\bC};\bZ/2)
\cong \Lambda(y_6,y_{10})\quad (or\ \Lambda(y_6) \ for \ G')\]

For $G'$, we see $ d_7(y_6)=\rho^7$ since $H_{et}^*(X;\bZ/2)$ is finite.  Hence 
$E_7^{*,*'}\cong \bZ/2[\rho]/(\rho^7).$
By  dimensional reason $E_7^{*,*'}
\cong E_{\infty}^{*.*'}$.
Thus we have 
\[ H_{et}^*(R(\bG');\bZ/2)\cong E_7^{*.*'}\cong \bZ/2[\rho]/(\rho^7).\]

Similarly, $d_7(y_6)=\rho^7$ also for $G$ by the naturality of
the map $G'\subset G$.  Hence for $G$,  we see
$E_7^{*,*}\cong \bZ/2[\rho]/(\rho^7)\otimes \Lambda(y_{10}).$ 
Moreover by the dimensional reason $E_7^{*.*'}\cong E_{\infty}^{*,*'}$.
Hence we have 
$ H_{et}^*(R(\bG);\bZ/2)\cong \bZ/2[\rho]/(\rho^7)\otimes 
\Lambda(y_{10}).$
\end{proof}

Similarly, we have

\begin{cor}   Let $G$ be a simply connected simple group
having $2$-torsion in $H^*(G)$.
Let $G'=Spin_7$.
Then we have the isomorphism
\[H_{et}^*(R(\bG(\bR);\bZ/2)\cong H_{et}^*(R(\bG'(\bR));\bZ/2)\otimes P(y)/(y_6).\]
\end{cor}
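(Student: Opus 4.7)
The plan is to run the same Borel spectral sequence argument as in Lemma 9.5, and then check that the new generators of $P_G(y)$ beyond $y_6$ do not interact with the surviving $\rho$-tower. Throughout, set $X=R(\bG(\bR))$ and $X'=R(\bG'(\bR))$.

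First, I would invoke the Borel spectral sequence
\[
E_2^{*,*'}\cong H^*(B\bZ/2;\bZ/2)\otimes H_{et}^{*'}(X|_\bC;\bZ/2)\Rightarrow H_{et}^*(X;\bZ/2),
\]
and identify the $E_2$-page. Over $\bC$ the motive becomes (a sum of) Tate motives and Corollary 3.2 (together with the cycle map to etale cohomology) gives
\[
H_{et}^*(X|_\bC;\bZ/2)\cong P_G(y)/2,\qquad H_{et}^*(X'|_\bC;\bZ/2)\cong \Lambda(y_6).
\]
So $E_2(X)\cong \bZ/2[\rho]\otimes P_G(y)$ and $E_2(X')\cong \bZ/2[\rho]\otimes\Lambda(y_6)$, and the classifying map for the inclusion $G'\subset G$ produces a morphism of spectral sequences.

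Second, I would transport the differential from Lemma 9.5: for $X'$ one has $d_7(y_6)=\rho^7$, and by naturality of the Borel spectral sequence along $G'\subset G$ the same equality $d_7(y_6)=\rho^7$ holds in the spectral sequence for $X$. Since any simply connected simple $G$ with $2$-torsion in $H^*(G)$ has $y_6$ as one of the exterior generators of $P_G(y)$ (Borel--Toda: true for $Spin_n$ with $n\ge 7$, $G_2$, $F_4$, $E_6$, $E_7$, $E_8$), this kills exactly the ideal $(y_6)\subset P_G(y)$ and truncates the $\rho$-tower at $\rho^7$. Consequently
\[
E_8^{*,*'}\cong \bZ/2[\rho]/(\rho^7)\otimes P_G(y)/(y_6).
\]

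Third, I would show $E_8=E_\infty$ on dimensional grounds. The remaining generators $y_{2i}$ of $P_G(y)/(y_6)$ have $2i\ge 10$, so any potential differential $d_r(y_{2i})\in E_r^{r,2i-r+1}$ would have to be a polynomial in $\rho$ of degree $r$ times a class in $P_G(y)/(y_6)$ of even total degree $\le 2i$; since $\rho$ is truncated at $\rho^7$ and $P_G(y)/(y_6)$ has no generators in degrees $<10$, parity and the range $8\le r\le 2i+1$ forces the target to vanish. A Leibniz-rule check on products handles the monomials, so all higher differentials vanish, exactly as in the $Spin_{11}$ case.

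Finally, applied to $X'$ the same argument gives $H_{et}^*(X';\bZ/2)\cong \bZ/2[\rho]/(\rho^7)$, and comparing with the computation for $X$ yields
\[
H_{et}^*(R(\bG(\bR));\bZ/2)\cong H_{et}^*(R(\bG'(\bR));\bZ/2)\otimes P(y)/(y_6).
\]
The main obstacle is the third step: ruling out higher differentials on all generators of $P_G(y)/(y_6)$ simultaneously. This needs a uniform bookkeeping of the Borel--Toda generator degrees for each of the exceptional and spin families, but in every case the degrees of the new generators are bounded below by $10$, while the $\rho$-tower is cut at $\rho^7$, so only a dimensional check is required.
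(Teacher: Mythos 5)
Your plan follows exactly the route the paper intends: the paper's ``proof'' of this corollary is the single word ``Similarly'' pointing back to Lemma 9.5, and you have reconstructed that Borel spectral sequence argument (identify $E_2$, transport $d_7(y_6)=\rho^7$ by naturality from $Spin_7$, then close the spectral sequence for degree reasons). So the approach is the right one.

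However, your second step contains a genuine gap. You assert that $d_7(y_6)=\rho^7$ ``kills exactly the ideal $(y_6)\subset P_G(y)$.'' This is only true when $y_6$ is an exterior generator, i.e.\ $y_6^2=0$ in $P_G(y)$. That holds for $G_2$, $E_7$, and $Spin_n$ with $n\le 11$, but not for the bulk of the groups the corollary covers: for $G=E_8$ at $p=2$ one has $P(y)=\bZ/2[y_1,\dots,y_4]/(y_1^8,y_2^4,y_3^2,y_4^2)$ with $y_1=y_6$, so $y_6^2,\ y_6^4,\ y_6^6\ne 0$, and for $G=Spin_n$, $n\ge 13$, one has $y_6^2=y_{12}\ne 0$. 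In characteristic $2$ the Leibniz rule gives $d_7(y_6^{2j})=2j\,y_6^{2j-1}\rho^7=0$, so the $d_7$-homology of $\bZ/2[\rho]\otimes \bZ/2[y_6]/(y_6^{2^r})\otimes Q$ is $\bZ/2[\rho]/(\rho^7)\otimes\bZ/2[y_6^2]/(y_6^{2^r})\otimes Q$, which strictly contains the claimed $\bZ/2[\rho]/(\rho^7)\otimes P_G(y)/(y_6)$. Worse, your own third step then works against you: once $\rho^7=0$ on the $E_8$-page, every column of filtration $\ge 7$ vanishes, so no differential $d_r$ with $r\ge 8$ can remove the surviving classes $y_6^{2j}$ (a hypothetical $d_{13}(y_6^2)=\rho^{13}$ lands in zero). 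So for these $G$ your argument terminates at the wrong answer rather than at the stated isomorphism; either additional, unidentified differentials (or a different input, e.g.\ extra relations in $H^*_{et}(X|_\bC)$ or a multiplicative-extension argument) are needed, or the case analysis must be restricted to groups with $y_6^2=0$. A secondary, smaller gap: your dimensional argument for $d_7$ vanishing on the other generators ignores decomposable targets, e.g.\ for $E_8$ the class $d_7(y_{18})$ could a priori equal $\rho^7y_6^2$ since $P_{E_8}(y)$ is nonzero in degree $12$; naturality from $E_7$ does not exclude this because $y_6^2$ restricts to $0$ there. To be fair, the paper itself supplies no more detail than you do, so this is a gap you have inherited and made visible rather than introduced; but as written the proof does not establish the corollary for $E_8$ or for $Spin_n$ with $n\ge 13$.
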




\end{document}